\newcommand{\fl}[1]{\lfloor#1\rfloor}
\DeclareMathOperator*{\esssup}{ess\,sup}
\DeclareMathOperator*{\essinf}{ess\,inf}
\theoremstyle{plain}
\newtheorem*{thm*}{Theorem}
\newtheorem{thm}{Theorem}
\newtheorem*{prp*}{Proposition}
\newtheorem{prp}{Proposition}
\newtheorem*{lemma*}{Lemma}
\newtheorem{lemma}{Lemma}
\newtheorem*{cor*}{Corollary}
\theoremstyle{definition}
\newtheorem*{dfn*}{Definition}
\theoremstyle{remark}
\newtheorem*{remark*}{Remark}
\newtheorem{remark}{Remark}
\title{Operator self-similar processes and functional central limit theorems}
\author{Vaidotas Characiejus\thanks{Corresponding author. Tel.: +37064034677.}\qquad Alfredas Ra\v ckauskas\\
\small Faculty of Mathematics and Informatics, Vilnius University, Naugarduko 24, 03225 Vilnius, Lithuania\\
\small(e-mail: \href{mailto:vaidotas.characiejus@gmail.com}{vaidotas.characiejus@gmail.com}; \href{mailto:alfredas.rackauskas@mif.vu.lt}{alfredas.rackauskas@mif.vu.lt})}
\date{March 6, 2014}
\begin{document}

\maketitle
\begin{abstract}
Let $\{X_k:k\ge1\}$ be a linear process with values in the separable Hilbert space $L_2(\mu)$ given by $X_k=\sum_{j=0}^\infty(j+1)^{-D}\varepsilon_{k-j}$ for each $k\ge1$, where $D$ is defined by $Df=\{d(s)f(s):s\in\mathbb S\}$ for each $f\in L_2(\mu)$ with $d:\mathbb S\to\mathbb R$ and $\{\varepsilon_k:k\in\mathbb Z\}$ are independent and identically distributed $L_2(\mu)$-valued random elements  with $\operatorname E\varepsilon_0=0$ and $\operatorname E\|\varepsilon_0\|^2<\infty$. We establish sufficient conditions for the functional central limit theorem for $\{X_k:k\ge1\}$ when the series of operator norms $\sum_{j=0}^\infty\|(j+1)^{-D}\|$ diverges and  show that the limit process generates an operator self-similar process.

\smallskip
\noindent\textbf{Keywords:} linear process; long memory; self-similar process; functional central limit theorem.

\smallskip
\noindent\textbf{AMS MSC 2010:} 60B12; 60F17; 60G18. 
\end{abstract}

\section{Introduction}
Self-similar processes are stochastic processes that are invariant in distribution under
suitable scaling of time and space. More precisely, let $\xi=\{\xi(t):t\ge0\}$ be an $\mathbb R^q$-valued stochastic process defined on some probability space $(\Omega,\mathcal F,P)$. The process $\xi$ is said to be self-similar if for any $a>0$ there exists $b>0$ such that
$$
\{\xi(at):t\ge0\}\overset{fdd}=\{b\xi(t):t\ge0\},
$$
where $\overset{fdd}=$ denotes the equality of the finite-dimensional distributions.

Self-similar processes were first studied rigorously by Lamperti~\cite{lamperti1962}. Well-known examples are the Brownian motion and the fractional Brownian motion with Hurst parameter $0<H<1$ (in these cases $b$ is equal to $a^{1/2}$ and $a^H$ respectively). We refer to Embrechts and Maejima~\cite{embrechts2002} for the current state of knowledge about self-similar processes and their applications.

Laha and Rohatgi~\cite{laha1981} introduced \emph{operator} self-similar processes taking values in $\mathbb R^q$. They extended the notion of self-similarity to allow scaling by a class of matrices. Such processes were later studied by Hudson and Mason~\cite{hudson1982}, Maejima and Mason~\cite{maejima1994}, Lavancier, Philippe, and Surgailis~\cite{lavancier2009}, Didier and Pipiras~\cite{didier2011} among others.

Matache and Matache~\cite{matache2006} consider and study operator self-similar processes valued in (possibly infinite-dimensional) Banach spaces. Let $\mathbb E$ denote a Banach space and let $L(\mathbb E)$ be the algebra of all bounded linear operators on $\mathbb E$. Matache and Matache~\cite{matache2006} give the following definition.
\begin{dfn*}
An operator self-similar process is a stochastic process $\xi=\{\xi(t):t\ge0\}$ on $\mathbb E$ such that there is a family $\{T(a):a>0\}$ in $L(\mathbb E)$ with the property that for each $a>0$,
$$
\{\xi(at):t\ge0\}\overset{fdd}=\{T(a)\xi(t):t\ge0\}.
$$
\end{dfn*}
The family $\{T(a):a>0\}$ is called the scaling family of operators. If operators $\{T(a):a>0\}$ have the particular form $T(a)=a^GI$, where $G$ is some fixed scalar and $I$ is an identity operator, then a stochastic process is called self-similar instead of operator self-similar.

In this paper, we obtain an example of an operator self-similar process with values in the real separable Hilbert space $L_2(\mu)=L_2(\mathbb S,\mathcal S,\mu)$ of equivalence classes of $\mu$-almost everywhere equal square-integrable functions, where $(\mathbb S,\mathcal S,\mu)$ is a $\sigma$-finite measure space. Our example arises from the functional central limit theorem for a sequence of $L_2(\mu)$-valued random elements.

Let $\{X_k\}=\{X_k:k\ge1\}$ be random elements with values in the separable Banach space~$\mathbb E$ given by
\begin{equation}\label{eq:X_k}
X_k=\sum_{j=0}^\infty u_j\varepsilon_{k-j}
\end{equation}
for each $k\ge1$, where $\{u_j\}=\{u_j:j\ge0\}\subset L(\mathbb E)$ and $\{\varepsilon_k\}=\{\varepsilon_k:k\in\mathbb Z\}$ are independent and identically distributed $\mathbb E$-valued random elements with $\operatorname E\varepsilon_0=0$, $\operatorname E\|\varepsilon_0\|^2<\infty$, where $\|\cdot\|$ is the norm of the Banach space~$\mathbb E$. Let $\{\zeta_n\}=\{\zeta_n(t):t\in[0,1]\}_{n\ge1}$ be random polygonal functions (piecewise linear functions) constructed from the partial sums  $\{S_n\}=\{S_n=X_1+\ldots+X_n:n\ge 1\}$. The asymptotic behaviour of $\{S_n\}$ and that of $\{\zeta_n\}$ strongly depend on the convergence of the series $\sum_{j=0}^\infty\|u_j\|$, where $\|\cdot\|$ is the operator norm. Roughly speaking, if the series $\sum_{j=0}^\infty\|u_j\|$ converges, the asymptotic behaviour of $\{S_n\}$ and $\{\zeta_n\}$ is inherited from $\{\varepsilon_k\}$ (see Merlev\`ede, Peligrad, and Utev~\cite{merlevede1997}, Ra\v ckauskas and Suquet~\cite{rackauskas2010} for more details). However, this is not the case when  $\sum_{j=0}^\infty\|u_j\|=\infty$ (see Ra\v ckauskas and Suquet~\cite{rackauskas2011} and Characiejus and Ra\v ckauskas~\cite{characiejus2013}).

Ra\v ckauskas and Suquet~\cite{rackauskas2011} consider $\{X_k\}$ with values in an abstract separable Hilbert space $\mathbb H$ when $\sum_{j=0}^\infty\|u_j\|=\infty$ with $u_0=I$ and $u_j=j^{-T}$ for $j\ge1$, where $T\in L(\mathbb H)$ satisfies $\frac12I<T<I$ and $T$ commutes with the covariance operator of $\varepsilon_0$. We obtain an operator self-similar process with the covariance structure different from Ra\v ckauskas and Suquet~\cite{rackauskas2011} since $T$ does not necessarily commute with the covariance operator of $\varepsilon_0$ in our case.

Specifically, we investigate $\{X_k\}$ with values in $L_2(\mu)$ and $\{u_j\}$ given by 
\begin{equation}\label{eq:u_j}
u_j=(j+1)^{-D}
\end{equation}
for each $j\ge0$, where $D$ is a multiplication operator defined by $Df=\{d(s)f(s):s\in\mathbb S\}$ for each $f\in L_2(\mu)$ with a measurable function $d:\mathbb S\to\mathbb R$. Our main results (Theorem~\ref{thm:fclt1} and Theorem~\ref{thm:fclt2} in Section~\ref{sec:fclt}) establish  sufficient conditions for the convergence in distribution of $\zeta_n$ in the space $C([0,1];L_2(\mu))$ in the following two cases: either $d\in(1/2,1)$ (shorthand for $1/2<d(s)<1$ for all $s\in\mathbb S$) or $d=1$ (shorthand for $d(s)=1$ for all $s\in\mathbb S$). In the former case, we provide sufficient conditions for the convergence in distribution of $n^{-H}\zeta_n$ to a Gaussian stochastic process $\mathcal G$, where $\{n^{-H}\}$ are multiplication operators given by $n^{-H}f=\{n^{-[3/2-d(s)]}f(s): s\in \mathbb S\}$ for each $n\ge 1$ and $f\in L_2(\mu)$. In the latter case, we establish convergence in distribution of $(\sqrt n\log n)^{-1}\zeta_n$ to an $L_2(\mu)$-valued Wiener process.  The results of this paper generalize our previous results since in Characiejus and Ra\v ckauskas~\cite{characiejus2013} only the central limit theorem is investigated.

The rest of the paper is organized as follows. In Section~\ref{sec:pre},  we give two alternative ways to construct $\{X_k\}$ and establish some properties of $\{X_k\}$ and $\{\zeta_n\}$. The existence of an operator self-similar process $\mathcal X$ with values in $L_2(\mu)$ is established in Section~\ref{sec:oss}. In Section~\ref{sec:fclt}, we establish sufficient conditions for the functional central limit theorem.

\section{Preliminaries}\label{sec:pre}
\subsection{Construction of $\{X_k\}$}
There are two approaches to construct $\{X_k\}$ with values in $L_2(\mu)$. The first approach is to define~$\{X_k\}$ as stochastic processes with space varying memory and square $\mu$-integrable sample paths. The second approach is to define $L_2(\mu)$ valued random variable $X_k$ for each $k\ge1$ as series~\eqref{eq:X_k} with $u_j$ given by~\eqref{eq:u_j} and to investigate the convergence of such series. We present both of these two approaches.

\subsubsection*{First approach}
Let $\{\varepsilon_k\}=\{\varepsilon_k(s):s\in\mathbb S\}_{k\in \mathbb Z}$ be independent and identically distributed measurable stochastic processes defined on the probability space $(\Omega,\mathcal F,P)$, i.e.\ $\{\varepsilon_k\}$ are $\mathcal F\otimes\mathcal S$-measurable functions $\varepsilon_k:\Omega\times \mathbb S\to\mathbb R$. We require that $\operatorname E\varepsilon_0(s)=0$ and $\operatorname E\varepsilon_0^2(s)<\infty$ for each $s\in\mathbb S$ and denote
$$
\sigma(r,s)=\operatorname E[\varepsilon_0(r)\varepsilon_0(s)],\quad\sigma^2(s)=\operatorname E\varepsilon_0^2(s),\quad r,\,s\in\mathbb S.
$$

Define stochastic processes $\{X_k\}=\{X_k(s):s\in\mathbb S\}_{k\ge1}$ by setting
\begin{equation}\label{eq:series}
X_k(s)=\sum_{j=0}^\infty(j+1)^{-d(s)}\varepsilon_{k-j}(s)
\end{equation}
for each $s\in\mathbb S$ and each $k\ge1$. Observe that $d(s)>1/2$ is a necessary and sufficient condition for the almost sure convergence of series \eqref{eq:series} (this fact follows from Kolmogorov's three-series theorem). It is well-known that the growth rate of the partial sums $\{\sum_{k=1}^nX_k(s)\}$ depends on $d(s)$. Viewing $\mathbb S$ as the set of space indexes and $\mathbb Z$ as the set of time indexes, we thus have a functional process $\{X_k\}$ with space varying memory. We refer to Giraitis, Koul, and Surgailis~\cite{giraitis2012} for an encyclopedic treatment of long memory phenomenon of stochastic processes.

We denote 
$$
\gamma_h(r,s)=\operatorname E[X_0(r)X_h(s)],\quad\gamma_h(s)=\operatorname E[X_0(s)X_h(s)],\quad r\,,s\in\mathbb S,\quad h\in\mathbb N.
$$
For fixed $r, s\in \mathbb S$, the sequences $\{X_k(r)\}$ and $\{X_k(s)\}$ are stationary sequences of random variables with zero means and cross-covariance
\begin{equation}\label{eq:cross-co}
\gamma_h(r,s)=\sigma(r,s)\sum_{j=0}^\infty(j+1)^{-d(r)}(j+h+1)^{-d(s)}.
\end{equation}

Throughout the paper
\begin{equation}\label{eq:d(r,s)}
d(r,s)=d(r)+d(s),\quad r,\, s\in\mathbb S,
\end{equation}
and
\begin{equation}\label{eq:const}
c(r,s)=\int_0^\infty x^{-d(r)}(x+1)^{-d(s)}\mathrm dx,\quad r,\,s\in\mathbb S,
\end{equation}
provided that $1/2<d(r)<1$, $d(s)>1/2$. Let us observe that $c(r,s)=\mathrm B(1-d(r),d(r,s)-1)$, where $\mathrm B$ is the beta function. If $r=s$, we denote $c(r,s)$ by $c(s)$. $c(s)$ can be estimated from above with the following inequality
\begin{equation}\label{eq:c(s)}
c(s)\le\frac1{1-d(s)}+\frac1{2d(s)-1}.
\end{equation}

Proposition~\ref{prp:assbeh} gives the asymptotic behaviour of  $\gamma_h(r,s)$ and Proposition~\ref{prp:lm} provides a necessary and sufficient condition for the summability of the series $\sum_{k=0}^\infty\gamma_k(r,s)$ (for the proof, see Characiejus and Ra\v ckauskas~\cite{characiejus2013}). The notation $a_n\sim b_n$ indicates that the ratio of the two sequences tends to $1$ as $n\to\infty$. 
\begin{prp}\label{prp:assbeh}
If $1/2<d(r)<1$ and $d(s)>1/2$, then
$$
\gamma_h(r,s)\sim c(r,s)\sigma(r,s)\cdot h^{1-d(r,s)}.
$$
If $d(r)=d(s)=1$, then
$$
\gamma_h(r,s)\sim\sigma(r,s)\cdot h^{-1}\log h.
$$
\end{prp}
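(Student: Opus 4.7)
The proof reduces, via the formula
\[
\gamma_h(r,s) = \sigma(r,s)\, S_h(r,s), \qquad S_h(r,s) := \sum_{j=0}^\infty (j+1)^{-d(r)}(j+h+1)^{-d(s)},
\]
to analyzing the pure series $S_h(r,s)$ as $h\to\infty$. The two cases behave very differently, so I would treat them separately.

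For the long-memory case $1/2<d(r)<1$, $d(s)>1/2$, I would view $S_h(r,s)$ as a (regularized) Riemann sum for the integral defining $c(r,s)$. Factoring out $h^{-d(r,s)}$ gives
\[
S_h(r,s) = h^{-d(r,s)} \sum_{j=0}^\infty \Bigl(\tfrac{j+1}{h}\Bigr)^{-d(r)} \Bigl(\tfrac{j+1}{h}+1\Bigr)^{-d(s)},
\]
so that $h\cdot S_h(r,s)/h^{1-d(r,s)}$ is a Riemann sum of mesh $1/h$ for $f(x)=x^{-d(r)}(x+1)^{-d(s)}$ on $(0,\infty)$. Since $d(r)<1$ controls integrability at zero and $d(r)+d(s)>1$ controls integrability at infinity, $\int_0^\infty f(x)\,dx=c(r,s)<\infty$. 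To make the Riemann-sum heuristic rigorous I would use the monotonicity of $f$ to sandwich the sum between integrals of the shifted integrand, then apply the change of variables $x=hy$ to both bounds and pass to the limit by the dominated convergence theorem, with dominating function $y^{-d(r)}(y+1)^{-d(s)}$ (obtained from the bounds $(y+1/h)^{-d(r)}\le y^{-d(r)}$ and $(y+1+1/h)^{-d(s)}\le(y+1)^{-d(s)}$). This yields $S_h(r,s)\sim c(r,s)\,h^{1-d(r,s)}$ and hence the first claim.

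For the boundary case $d(r)=d(s)=1$, the integral $c(r,s)$ diverges logarithmically, so the Riemann-sum approach fails and one needs an exact computation. The key trick is the partial-fraction identity
\[
\frac{1}{(j+1)(j+h+1)} = \frac{1}{h}\Bigl(\frac{1}{j+1}-\frac{1}{j+h+1}\Bigr),
\]
which turns $S_h(r,s)$ into a telescoping difference of harmonic tails. Summing over $j=0,\dots,N$ and letting $N\to\infty$ collapses the sum to $h^{-1}\sum_{k=1}^h k^{-1} = h^{-1}H_h$ (the residual piece $\sum_{k=N+2}^{N+h+1}k^{-1}\to0$ for fixed $h$). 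Using $H_h\sim\log h$ immediately gives $\gamma_h(r,s)\sim\sigma(r,s)\,h^{-1}\log h$.

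The main obstacle is the Riemann-sum justification in the first case, where one must handle two simultaneous pathologies: the integrand $f$ has a non-integrable-looking singularity at $0$ (contributing the first summand $\sim h^{d(r)-1}$, which fortunately is $o(1)$ since $d(r)<1$), and the domain is unbounded with only algebraic decay. I would resolve this cleanly by the monotone sandwiching plus DCT argument above rather than trying to control Riemann-sum error directly; everything else is essentially mechanical.
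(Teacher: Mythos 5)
Your argument is correct: for $1/2<d(r)<1$, $d(s)>1/2$ the sandwich of the sum between integrals of the decreasing function $x^{-d(r)}(x+1)^{-d(s)}$ (integrable at $0$ because $d(r)<1$ and at infinity because $d(r,s)>1$) gives $S_h\sim c(r,s)h^{1-d(r,s)}$, and the partial-fraction telescoping gives the exact value $S_h=h^{-1}\sum_{k=1}^{h}k^{-1}\sim h^{-1}\log h$ in the boundary case. Note that the paper does not actually prove Proposition~\ref{prp:assbeh}; it defers the proof to Characiejus and Ra\v ckauskas~\cite{characiejus2013}, and your route is the standard one, so there is no gap or discrepancy to report.
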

\begin{prp}\label{prp:lm}
The series
$$
\sum_{k=0}^\infty\gamma_k(r,s)
$$
converges if and only if $d(r)>1$ and $d(r,s)>2$.
\end{prp}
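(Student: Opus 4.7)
The plan is to expand $\gamma_k(r,s)$ by the explicit representation~\eqref{eq:cross-co} and use Tonelli to turn the resulting double sum into a one-dimensional series. Since the summands $(j+1)^{-d(r)}(j+k+1)^{-d(s)}$ have constant sign, I may factor out $\sigma(r,s)$ and swap the order of summation to obtain
\begin{align*}
\sum_{k=0}^\infty\gamma_k(r,s)
&=\sigma(r,s)\sum_{j=0}^\infty(j+1)^{-d(r)}\sum_{k=0}^\infty(j+k+1)^{-d(s)}\\
&=\sigma(r,s)\sum_{j=0}^\infty(j+1)^{-d(r)}\sum_{m=j+1}^\infty m^{-d(s)}.
\end{align*}

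The inner tail $\sum_{m\ge j+1}m^{-d(s)}$ is finite precisely when $d(s)>1$, and in that regime integral comparison with $\int_{j+1}^\infty x^{-d(s)}\,\mathrm dx$ gives the two-sided bound $\sum_{m\ge j+1}m^{-d(s)}\asymp(j+1)^{1-d(s)}$ with constants uniform in $j$. Substituting back, the remaining outer series is comparable to $\sum_{j\ge 0}(j+1)^{1-d(r,s)}$, which converges iff $d(r,s)>2$. Chaining these two one-sided characterizations together yields the ``if'' direction; for ``only if'', the chain is reversible since Tonelli applies in either order for non-negative summands and both comparisons used above are two-sided.

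The main technical obstacle is the uniformity in $j$ of the constants in the integral comparison, which is exactly what is needed to upgrade the $\asymp$ into a genuine ``iff''. An alternative route is to invoke Proposition~\ref{prp:assbeh}, which already supplies the precise asymptotic $\gamma_h(r,s)\sim c(r,s)\sigma(r,s)h^{1-d(r,s)}$ in the regime $1/2<d(r)<1$, $d(s)>1/2$, and to handle the remaining regime in which $\sum_j(j+1)^{-d(r)}$ is itself summable by dominated convergence applied to $\gamma_h(r,s)=\sigma(r,s)h^{-d(s)}\sum_j(j+1)^{-d(r)}(1+(j+1)/h)^{-d(s)}$. A subsidiary point is the degenerate case $\sigma(r,s)=0$, in which every $\gamma_k$ vanishes and the series converges trivially.
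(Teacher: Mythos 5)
The paper itself contains no proof of Proposition~\ref{prp:lm}: it defers to Characiejus and Ra\v ckauskas~\cite{characiejus2013}, so there is nothing in-paper to compare your argument against line by line. That said, your Tonelli route is the natural one and the computation itself is correct: the terms $(j+1)^{-d(r)}(k+j+1)^{-d(s)}$ are nonnegative, so the interchange of sums is legitimate; the inner tail $\sum_{m\ge j+1}m^{-d(s)}$ is finite exactly when $d(s)>1$ and is then comparable to $(j+1)^{1-d(s)}$ uniformly in $j$; and the resulting outer series converges exactly when $d(r,s)>2$. Your alternative route via Proposition~\ref{prp:assbeh} plus dominated convergence, and your remark about the degenerate case $\sigma(r,s)=0$ (where the ``only if'' direction fails for any $d$), are both sound.

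The genuine problem is that what you have proved is ``the series converges if and only if $d(s)>1$ and $d(r,s)>2$,'' whereas the statement you were asked to prove reads $d(r)>1$ and $d(r,s)>2$. These are different parameter sets, and you conclude the proof as if they coincided, without reconciling them. In fact the printed statement cannot be literally correct: take $d(r)=3/2$, $d(s)=4/5$ and $\sigma(r,s)>0$, so that $d(r)>1$ and $d(r,s)=23/10>2$; keeping only the $j=0$ term of \eqref{eq:cross-co} gives $\gamma_k(r,s)\ge\sigma(r,s)(k+1)^{-4/5}$, and the series diverges. The point is that in \eqref{eq:cross-co} the lag $k$ sits inside the factor carrying the exponent $d(s)$, so once $\sum_j(j+1)^{-d(r)}$ converges it is $d(s)$, not $d(r)$, that governs the decay of $\gamma_k(r,s)$ in $k$ (indeed $\gamma_k(r,s)\sim\sigma(r,s)\bigl[\sum_{j\ge0}(j+1)^{-d(r)}\bigr]k^{-d(s)}$ when $d(r)>1$, exactly as your dominated-convergence argument shows). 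So your proof establishes the proposition with the roles of $r$ and $s$ transposed in the first condition; a correct write-up must either explicitly adopt that corrected statement or flag the discrepancy, rather than assert that the stated equivalence has been derived.
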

\begin{remark}
The series $\sum_{k=0}^\infty\gamma_k(s)$ converges if and only if $d(s)>1$.
\end{remark}

Let $\mathcal L_2(\mu)=\mathcal L_2(\mathbb S,\mathcal S,\mu)$ be a separable space of real valued square $\mu$-integrable functions with a seminorm
$$\|f\|=\Bigl[\int_\mathbb S|f(v)|^2\mu(\mathrm dv)\Bigr]^{1/2},\quad f\in\mathcal L_2(\mu),$$
and let $L_2(\mu)=L_2(\mathbb S,\mathcal S,\mu)$ be the corresponding Hilbert space of equivalence classes of $\mu$-almost everywhere equal functions with an inner product
$$\langle f,g\rangle=\int_\mathbb Sf(v)g(v)\mu(\mathrm dv),\quad f,g\in L_2(\mu).$$
With an abuse of notation, we denote by $f$ both a function and its equivalence class to avoid cumbersome notation. The intended meaning should be clear from the context.

Proposition~\ref{prp:samplepaths} establishes a necessary and sufficient condition for the sample paths of the stochastic process $\{X_k(s):s\in\mathbb S\}$ to be almost surely square $\mu$-integrable with $\operatorname E\|X_k\|^2<\infty$ for each $k\ge 1$ (see Characiejus and Ra\v ckauskas~\cite{characiejus2013} for the proof).
\begin{prp}\label{prp:samplepaths}
The sample paths of the stochastic process $\{X_k(s):s\in\mathbb S\}$ almost surely belong to the space $\mathcal L_2(\mu)$ and $\operatorname E\|X_k\|^2<\infty$ for each $k\in\mathbb Z$ if and only if both of the integrals
$$
\operatorname E\|\varepsilon_0\|^2=\int_\mathbb S\sigma^2(v)\mu(\mathrm dv)\quad\text{and}\quad\int_\mathbb S\frac{\sigma^2(v)}{2d(v)-1}\mu(\mathrm dv)
$$
are finite.
\end{prp}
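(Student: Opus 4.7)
The plan is to reduce the biconditional to the single condition $\operatorname E\|X_k\|^2<\infty$ and to evaluate this quantity by Tonelli's theorem followed by an elementary integral comparison. The observation that $\operatorname E\|X_k\|^2<\infty$ automatically forces $\|X_k\|<\infty$ $P$-almost surely (hence also sample paths in $\mathcal L_2(\mu)$) collapses the conjunction appearing in the statement to the single condition $\operatorname E\|X_k\|^2<\infty$; by stationarity it suffices to treat one fixed $k$.

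First I would check that $(\omega,s)\mapsto X_k(\omega,s)$ admits a jointly $\mathcal F\otimes\mathcal S$-measurable version: the partial sums $X_k^{(N)}(\omega,s)=\sum_{j=0}^N(j+1)^{-d(s)}\varepsilon_{k-j}(\omega,s)$ are jointly measurable as finite linear combinations (with $s$-measurable coefficients) of the jointly measurable $\varepsilon_{k-j}$, and $X_k$ can be defined as their pointwise limit where it exists and $0$ otherwise. For each $s$ with $d(s)>1/2$ the random variables $\varepsilon_{k-j}(\cdot,s)$ are pairwise orthogonal in $L_2(P)$, the partial sums are Cauchy in $L_2(P)$ since $\sum_{j=0}^\infty(j+1)^{-2d(s)}<\infty$, and one obtains $\operatorname E X_k^2(s)=\sigma^2(s)\sum_{n=1}^\infty n^{-2d(s)}$. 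Applying Tonelli's theorem to the nonnegative jointly measurable $X_k^2$ then gives
$$
\operatorname E\|X_k\|^2=\int_\mathbb S\sigma^2(v)\sum_{n=1}^\infty n^{-2d(v)}\mu(\mathrm dv).
$$

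Finally, comparison with $\int_1^\infty x^{-2d}\mathrm dx=(2d-1)^{-1}$ produces the elementary sandwich
$$
\max\Bigl\{1,\tfrac{1}{2d(v)-1}\Bigr\}\le\sum_{n=1}^\infty n^{-2d(v)}\le 1+\tfrac{1}{2d(v)-1},
$$
and integration against $\sigma^2\mathrm d\mu$ yields a two-sided bound whose right-hand side is $\int_\mathbb S\sigma^2\mathrm d\mu+\int_\mathbb S\sigma^2/(2d-1)\mathrm d\mu$ and whose left-hand side dominates each of these two integrals separately. This bound shows that $\operatorname E\|X_k\|^2$ is finite if and only if both integrals in the statement are finite, giving both directions of the biconditional at once. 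The main obstacle I anticipate is technical rather than analytic: producing the jointly measurable version of $X_k$ and justifying the Fubini/Tonelli interchange cleanly in the presence of a $\sigma$-finite but possibly infinite $\mu$; once that bookkeeping is in place the quantitative estimates are entirely routine.
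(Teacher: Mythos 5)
Your proof is correct and follows essentially the same route the paper takes (the paper defers the proof of this proposition to Characiejus and Ra\v ckauskas~\cite{characiejus2013}, but its in-text proof of Proposition~\ref{prp:X_k} is the identical computation): Tonelli to write $\operatorname E\|X_k\|^2=\int_\mathbb S\sigma^2(v)\sum_{n=1}^\infty n^{-2d(v)}\mu(\mathrm dv)$, then the integral-comparison sandwich $\frac1{2d(v)-1}\le\sum_{n=1}^\infty n^{-2d(v)}\le1+\frac1{2d(v)-1}$. Your only addition, the lower bound by $\max\{1,\frac1{2d(v)-1}\}$ so that \emph{both} integrals are dominated by $\operatorname E\|X_k\|^2$, is exactly the refinement needed for the ``only if'' direction of this biconditional, and it is correct.
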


A stochastic process $\{\xi(s):s\in\mathbb S\}$ defined on a probability space $(\Omega,\mathcal F,P)$ with sample paths in $\mathcal L_2(\mu)$ induces the $\mathcal F-\mathcal B(L_2(\mu))$-measurable function $\omega\to\{\xi(s)(\omega):s\in \mathbb S\}:\Omega\to L_2(\mu)$, where $\mathcal B(L_2(\mu))$ is the Borel $\sigma$-algebra of $L_2(\mu)$ (for more details, see Cremers and Kadelka~\cite{cremers1984}). Therefore  we shall  frequently  consider  each stochastic process $\{\xi(s):s\in \mathbb S\}$ with sample paths in  $\mathcal L_2(\mu)$ as a random element with values in $L_2(\mu)$ and denote it by $\{\xi(s): s\in \mathbb S\}$  or simply by $\xi$.

\subsubsection*{Second approach}
Now we establish a necessary and sufficient condition for the mean square convergence of series~\eqref{eq:X_k} with $\{u_j\}$ given by~\eqref{eq:u_j}. Recall that $(j+1)^{-D}f=\{(j+1)^{-d(s)}f(s):s\in\mathbb S\}$ for each $j\ge0$ and $f\in L_2(\mu)$ since $e^T=\sum_{j=0}^\infty T^j/j!$ and $\lambda^T=e^{T\log\lambda}$ for $T\in L(\mathbb E)$ and $\lambda>0$.

\begin{prp}\label{prp:X_k}
Series~\eqref{eq:X_k} with $u_j$ given by~\eqref{eq:u_j} and $L_2(\mu)$-valued random elements $\{\varepsilon_k\}$ such that $\operatorname E\varepsilon_0=0$ and $\operatorname E\|\varepsilon_0\|^2<\infty$ converges in mean square if and only if there exists a measurable set $\mathbb S_0\subset\mathbb S$ such that $\mu(\mathbb S\setminus\mathbb S_0)=0$, $d(s)>1/2$ for all $s\in\mathbb S_0$ and the integral
$$
\int_\mathbb S\frac{\sigma^2(v)}{2d(v)-1}\mu(\mathrm dv)
$$
is finite.
\end{prp}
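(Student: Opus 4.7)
The plan is to exploit that $L_2(\mu)$ is a Hilbert space and that the summands in~\eqref{eq:X_k} are pairwise orthogonal in $L^2(\Omega;L_2(\mu))$ because of independence and $\operatorname E\varepsilon_0=0$. Consequently, the partial sums $T_n=\sum_{j=0}^n(j+1)^{-D}\varepsilon_{k-j}$ form a Cauchy sequence in $L^2(\Omega;L_2(\mu))$ if and only if
$$
\sum_{j=0}^\infty\operatorname E\|(j+1)^{-D}\varepsilon_{k-j}\|^2<\infty,
$$
so the problem reduces to the convergence of this scalar series.

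Next I would compute each term. Since $(j+1)^{-D}$ acts as pointwise multiplication by $(j+1)^{-d(v)}$ and $\varepsilon_{k-j}$ is distributed as $\varepsilon_0$, Fubini applied to the nonnegative integrand $(j+1)^{-2d(v)}\varepsilon_{k-j}^2(v)$ yields
$$
\operatorname E\|(j+1)^{-D}\varepsilon_{k-j}\|^2=\int_{\mathbb S}(j+1)^{-2d(v)}\sigma^2(v)\mu(\mathrm dv).
$$
Summing over $j$ and swapping sum and integral by Tonelli gives
$$
\sum_{j=0}^\infty\operatorname E\|(j+1)^{-D}\varepsilon_{k-j}\|^2=\int_{\mathbb S}\sigma^2(v)\sum_{j=0}^\infty(j+1)^{-2d(v)}\mu(\mathrm dv).
$$
The inner series converges iff $d(v)>1/2$, and a standard integral-test comparison (in the same spirit as~\eqref{eq:c(s)}) shows that, whenever $d(v)>1/2$,
$$
\frac{1}{2d(v)-1}\le\sum_{j=0}^\infty(j+1)^{-2d(v)}\le 1+\frac{1}{2d(v)-1}.
$$
Combining this with $\int_{\mathbb S}\sigma^2(v)\mu(\mathrm dv)=\operatorname E\|\varepsilon_0\|^2<\infty$ to absorb the constant term in the upper bound, finiteness of the double sum becomes equivalent to $d(v)>1/2$ $\mu$-a.e.\ together with $\int_{\mathbb S}\sigma^2(v)/(2d(v)-1)\mu(\mathrm dv)<\infty$, which is exactly the stated condition (with $\mathbb S_0$ taken to be the set on which $d>1/2$).

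The step that I expect to require the most care is the necessity direction together with the null-set bookkeeping: one must argue that if $d(v)\le 1/2$ on a measurable set $A$ of positive $\mu$-measure, then the integrand $\sigma^2(v)\sum_j(j+1)^{-2d(v)}$ equals $+\infty$ on $A\cap\{\sigma^2>0\}$, which forces the integral to be infinite and hence the series to diverge in mean square. The residual case, where $\sigma^2$ vanishes $\mu$-a.e.\ on such an $A$, is handled by absorbing $A$ into the admissible $\mu$-null complement $\mathbb S\setminus\mathbb S_0$; once this bookkeeping is done, everything else follows from the two-sided estimate above.
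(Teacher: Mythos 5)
Your proposal is correct and follows essentially the same route as the paper: orthogonality of the independent zero-mean summands reduces mean-square convergence to finiteness of $\sum_{j}\operatorname E\|(j+1)^{-D}\varepsilon_{k-j}\|^2=\int_{\mathbb S}\sigma^2(v)\sum_{j=1}^\infty j^{-2d(v)}\mu(\mathrm dv)$, which is then sandwiched by the same two-sided bound $\tfrac1{2d(v)-1}\le\sum_{j\ge1}j^{-2d(v)}\le1+\tfrac1{2d(v)-1}$ and the assumption $\operatorname E\|\varepsilon_0\|^2<\infty$. Your explicit treatment of the necessity direction and the null-set bookkeeping (the set where $d\le1/2$ versus where $\sigma^2$ vanishes) is a welcome addition that the paper leaves implicit.
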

\begin{proof}
Let $N>M$, $\sigma^2(s)=\operatorname E\varepsilon_0^2(s)$, $s\in\mathbb S$, and observe that
$$
\operatorname E\biggl\|\sum_{j=M+1}^Nu_{j}\varepsilon_{j-k}\biggr\|^2=\sum_{j=M+1}^N\int_\mathbb S(j+1)^{-2d(v)}\sigma^2(v)\mu(\mathrm dv).
$$
Since 
$$
\sum_{j=0}^\infty\int_\mathbb S(j+1)^{-2d(r)}\sigma^2(r)\mu(\mathrm dr)
	=\int_\mathbb S\sum_{j=1}^\infty j^{-2d(r)}\sigma^2(r)\mu(\mathrm dr)
$$
and
$$
\frac1{2d(r)-1}\le\sum_{j=1}^\infty j^{-2d(r)}\le1+\frac1{2d(r)-1}
$$
we have that
$$
\int_\mathbb S\frac{\sigma^2(r)}{2d(r)-1}\mu(\mathrm dr)
	\le\int_\mathbb S\sigma^2(r)\sum_{j=1}^\infty j^{-2d(r)}\mu(\mathrm dr)
	\le\operatorname E\|\varepsilon_0\|^2+\int_\mathbb S\frac{\sigma^2(r)}{2d(r)-1}\mu(\mathrm dr)
$$
and the proof is complete.
\end{proof}
\begin{remark}
Since $\{\varepsilon_k\}$ are independent, it follows from L\'evy-It\^o-Nisio theorem (see Ledoux and Talagrand~\cite{ledoux1991}, Theorem~6.1, p.~151) and Proposition~\ref{prp:X_k} that series~\eqref{eq:X_k} also converges almost surely. Hence, $X_k$ for each $k\ge1$ is an $L_2(\mu)$-valued random element and Proposition~\ref{prp:X_k} is consistent with Proposition~\ref{prp:samplepaths}.
\end{remark}
\begin{remark}
Since $u_j$ given by~\eqref{eq:u_j} are multiplication operators from $L_2(\mu)$ to $L_2(\mu)$, we have that the operator norm $\|u_j\|=\inf\{c>0:\mu(s\in\mathbb S:|(j+1)^{-d(s)}|>c)=0\}$. If $\underline d=\essinf d=1/2$, then we have that $\sum_{j=0}^\infty\|u_j\|^2=\sum_{j=1}^\infty j^{-1}=\infty$, but series \eqref{eq:X_k} might still converge. The square summability of the operator norms of $u_j$ is not a necessary condition for the almost sure convergence of series \eqref{eq:X_k}.
\end{remark}

\subsection{Random polygonal functions $\{\zeta_n\}$}
Let 
$\{\zeta_n\}=\{\zeta_n(t):t\in[0,1]\}_{n\ge1}$ be random polygonal functions (piecewise linear functions) constructed from partial sums  $S_k=X_1+\cdots+X_k, k\ge 1$:
$$
\zeta_n(t)=S_{\fl{nt}}+\{nt\}X_{\fl{nt}+1}
$$
for each $n\ge1$ and each $t\in[0,1]$, where $\fl{\cdot}$ is the floor function defined by $\fl{x}=\max\{m\in\mathbb Z\mid m\le x\}$ for $x\in\mathbb R$ and $\{x\}=x-\fl{x}$ is the fractional part of $x\in\mathbb R$. We adopt the usual convention that an empty sum equals $0$.
For each $t\in[0, 1]$ the random function  $\zeta_n(t)$ can be expressed as a series
$$
\zeta_n(t)=\sum_{j=-\infty}^{\fl{nt}+1}a_{nj}(t)\varepsilon_j,
$$
where
\begin{equation}\label{eq:opweights}
a_{nj}(t)=\sum_{k=1}^{\fl{nt}}v_{k-j}+\{nt\}v_{\fl{nt}+1-j}
\end{equation}
and
\begin{equation}\label{eq:opweightsv}
v_j=
\begin{cases}
u_j,&\mbox{ if }j\ge0;\\
0,&\mbox{ if }j<0.
\end{cases}
\end{equation}
Denote $\zeta_n(s,t)=\sum_{k=1}^{\lfloor nt\rfloor}X_k(s)+\{nt\}X_{\fl{nt}+1}(s)$ for $s\in\mathbb S$ and $t\in[0,1]$.
Each random variable ${\zeta_n(s,t)}$ can be expressed as a series  $\zeta_n(s,t)=\sum_{j=-\infty}^{\fl{nt}+1}a_{nj}(s,t)\varepsilon_j(s)$,
where
\begin{equation}\label{eq:weights}
a_{nj}(s,t)=\sum_{k=1}^{\fl{nt}}v_{k-j}(s)+\{nt\}v_{\fl{nt}+1-j}(s)
\end{equation}
and
\begin{equation}\label{eq:weightsv}
v_j(s)=
\begin{cases}
(j+1)^{-d(s)},&\mbox{ if }j\ge0;\\
0,&\mbox{ if }j<0.
\end{cases}
\end{equation}
Observe that $v_j=v_j(s)$ if $d=1$ since $u_j=(j+1)^{-1}$ if $d=1$. Notice that the upper bounds of summation of the series in the expressions of $\zeta_n(t)$ and $\zeta_n(s,t)$ can be extended up to $\infty$ since $a_{nj}(s,t)=0$ and $a_{nj}(t)=0$ if $j>\fl{nt}+1$.

Set $\mathbb T=\mathbb S\times [0, \infty)$ and define the function $V:\mathbb T^2\to\mathbb R$  by
\begin{multline}\label{eq:covV}
V((r,t),(s,u))=\\
	=\frac{\sigma(r,s)}{[2-d(r,s)][3-d(r,s)]}\bigl[c(s,r)t^{3-d(r,s)}+c(r,s)u^{3-d(r,s)}-C(r,s;t-u)|t-u|^{3-d(r,s)}\bigr],
\end{multline}
where $d(r,s)$ is given by~\eqref{eq:d(r,s)}, $c(r,s)$ is given by~\eqref{eq:const} and
$$
C(r,s;t)=
\begin{cases}
c(r,s)&\mbox{ if }t<0;\\
c(s,r)&\mbox{ if }t>0.
\end{cases}
$$

Now we are prepared to derive  the asymptotic behavior of the sequence of cross-covariances of~$\zeta_n$.
\begin{prp}\label{prp:cov}
Suppose either $1/2<d(r)<1$ and $1/2<d(s)<1$ or $d(r)=d(s)=1$. In both cases, the following asymptotic relation holds
$$\operatorname E[\zeta_n(r,t)\zeta_n(s,u)]\sim\operatorname E[S_{\fl{nt}}(r)S_{\fl{nu}}(s)].$$
\end{prp}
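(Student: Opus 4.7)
The plan is to expand $\zeta_n$ about $S_{\fl{n\cdot}}$ and show that the three extra cross terms are of strictly smaller order than $\operatorname E[S_{\fl{nt}}(r)S_{\fl{nu}}(s)]$. From $\zeta_n(s,t)=S_{\fl{nt}}(s)+\{nt\}X_{\fl{nt}+1}(s)$ and bilinearity of expectation,
\begin{align*}
\operatorname E[\zeta_n(r,t)\zeta_n(s,u)]
={} & \operatorname E[S_{\fl{nt}}(r)S_{\fl{nu}}(s)]\\
& + \{nt\}\operatorname E[X_{\fl{nt}+1}(r)S_{\fl{nu}}(s)] + \{nu\}\operatorname E[S_{\fl{nt}}(r)X_{\fl{nu}+1}(s)] \\
& + \{nt\}\{nu\}\operatorname E[X_{\fl{nt}+1}(r)X_{\fl{nu}+1}(s)].
\end{align*}
Since $\{nt\},\{nu\}\in[0,1)$, the claimed equivalence reduces to showing that each of the three remaining expectations is $o\bigl(\operatorname E[S_{\fl{nt}}(r)S_{\fl{nu}}(s)]\bigr)$.

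Stationarity gives $\operatorname E[X_i(r)X_j(s)]=\gamma_{j-i}(r,s)$ for $j\ge i$ and $\gamma_{i-j}(s,r)$ for $j<i$, and \eqref{eq:cross-co} shows that $|\gamma_h(r,s)|$ is uniformly bounded in $h$. I would write
$$
\operatorname E[S_{\fl{nt}}(r)S_{\fl{nu}}(s)]=\sum_{i=1}^{\fl{nt}}\sum_{j=1}^{\fl{nu}}\operatorname E[X_i(r)X_j(s)]
$$
and split the double sum according to whether $|j-i|\le K$ (which contributes only $O(n)$, because $\gamma_h$ is bounded and there are $O(nK)$ such pairs) or $|j-i|>K$, where Proposition~\ref{prp:assbeh} gives a sharp power-law asymptotic. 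A Riemann-sum approximation then yields that $\operatorname E[S_{\fl{nt}}(r)S_{\fl{nu}}(s)]$ grows like $n^{3-d(r,s)}$ when $1/2<d(r),d(s)<1$ and like $n(\log n)^2$ when $d(r)=d(s)=1$, with leading coefficients matching the formula~\eqref{eq:covV} for $V((r,t),(s,u))$; the degenerate case $\sigma(r,s)=0$, in which both sides of the stated equivalence vanish identically, is interpreted trivially.

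The same ideas bound the perturbations. The single sum $\operatorname E[S_{\fl{nt}}(r)X_{\fl{nu}+1}(s)]=\sum_{i=1}^{\fl{nt}}\gamma_{\fl{nu}+1-i}(r,s)$ (and its symmetric counterpart) is $O(n^{2-d(r,s)})$ in the first regime and $O((\log n)^2)$ in the second, while the single covariance $\operatorname E[X_{\fl{nt}+1}(r)X_{\fl{nu}+1}(s)]$ is $O(n^{1-d(r,s)})$ or $O(n^{-1}\log n)$, respectively. In either regime, each perturbation divided by the main term is $O(1/n)$, and the equivalence follows. The main obstacle is carrying out the Riemann-sum approximation for the double sum rigorously near the diagonal $i=j$: there Proposition~\ref{prp:assbeh} is not applicable, and one must control the $O(n)$ near-diagonal pairs using the uniform bound on $\gamma_h$ while applying the asymptotic formula only off the diagonal.
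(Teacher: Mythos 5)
Your proposal is correct and follows essentially the same route as the paper: expand $\zeta_n$ as $S_{\fl{n\cdot}}$ plus the interpolation term, and observe that the three cross terms (bounded by $O(n)$, or sharper as in your estimates) are negligible against the main term, which by Proposition~\ref{prp:cov2} grows superlinearly. Your Riemann-sum re-derivation of the growth rate of $\operatorname E[S_{\fl{nt}}(r)S_{\fl{nu}}(s)]$ is unnecessary for this statement, since that is precisely the content of Proposition~\ref{prp:cov2}, which can simply be cited.
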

\begin{prp}\label{prp:cov2}
If $1/2<d(r)<1$ and $1/2<d(s)<1$, then
$$\operatorname E[S_{\fl{nt}}(r)S_{\fl{nu}}(s)]\sim V((r,t),(s,u))\cdot n^{3-d(r,s)}$$
for $(r,t),(s,u)\in\mathbb S\times[0,1]$, where $V$ is given by \eqref{eq:covV}.

If $d(r)=d(s)=1$, then
$$\operatorname E[S_{\fl{nt}}(r)S_{\fl{nu}}(s)]\sim\sigma(r,s)\cdot \min(t,u)\cdot n\log^2n.$$
\end{prp}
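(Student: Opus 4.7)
The plan is to expand the covariance as a double sum of $\gamma$-values, apply the asymptotic from Proposition~\ref{prp:assbeh} to each term, and pass to a Riemann sum limit whose explicit evaluation produces the claimed form.

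For the case $1/2 < d(r), d(s) < 1$, set $M = \fl{nt}$ and $N = \fl{nu}$. By stationarity, together with the observation that $\operatorname E[X_k(r)X_l(s)] = \gamma_{l-k}(r,s)$ for $l \ge k$ and $\gamma_{k-l}(s,r)$ for $l < k$,
$$
\operatorname E[S_M(r)S_N(s)] = M\gamma_0(r,s) + \sum_{\substack{1 \le k \le M \\ k < l \le N}}\gamma_{l-k}(r,s) + \sum_{\substack{1 \le k \le M \\ 1 \le l < k}}\gamma_{k-l}(s,r).
$$
The diagonal contribution is $O(n)$, negligible against the expected order $n^{3-d(r,s)}$. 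Substituting the equivalences $\gamma_h(r,s) \sim c(r,s)\sigma(r,s)h^{1-d(r,s)}$ and $\gamma_h(s,r) \sim c(s,r)\sigma(r,s)h^{1-d(r,s)}$, rescaling $k = n\tau$ and $l = n\rho$, and recognizing a Riemann sum on $[0,t]\times[0,u]$ yields the asymptotic
$$
n^{3-d(r,s)}\sigma(r,s)\Bigl[c(r,s)\iint_{\tau<\rho}(\rho-\tau)^{1-d(r,s)}\,d\tau\,d\rho + c(s,r)\iint_{\rho<\tau}(\tau-\rho)^{1-d(r,s)}\,d\tau\,d\rho\Bigr].
$$
Since $d(r,s) < 2$ the kernel is locally integrable. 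Assuming $t \le u$ and evaluating the two iterated integrals by direct computation gives
$$
\frac{c(s,r)\,t^{3-d(r,s)} + c(r,s)\bigl[u^{3-d(r,s)} - (u-t)^{3-d(r,s)}\bigr]}{(2-d(r,s))(3-d(r,s))},
$$
which equals $V((r,t),(s,u))/\sigma(r,s)$ because $C(r,s;t-u) = c(r,s)$ when $t < u$. The case $t > u$ follows by interchanging $(r,t)$ with $(s,u)$, using the symmetry $\operatorname E[S_M(r)S_N(s)] = \operatorname E[S_N(s)S_M(r)]$.

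For the case $d(r) = d(s) = 1$, the defining series \eqref{eq:cross-co} is symmetric in $(r,s)$, so $\gamma_h(r,s) = \gamma_h(s,r)$, and summing by rows gives
$$
\operatorname E[S_M(r)S_N(s)] = M\gamma_0(r,s) + \sum_{k=1}^M\Bigl[\sum_{m=1}^{k-1}\gamma_m(r,s) + \sum_{m=1}^{N-k}\gamma_m(r,s)\Bigr].
$$
Proposition~\ref{prp:assbeh} yields $\gamma_m(r,s) \sim \sigma(r,s)m^{-1}\log m$ and hence $\sum_{m=1}^p\gamma_m(r,s) \sim \tfrac12\sigma(r,s)\log^2 p$. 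Assuming $t \le u$ so that $M \le N$, both inner sums contribute $\tfrac12\sigma(r,s)\log^2 n$ to leading order for $k$ in the bulk, giving
$$
\operatorname E[S_M(r)S_N(s)] \sim M\sigma(r,s)\log^2 n \sim \sigma(r,s)\,t\,n\log^2 n = \sigma(r,s)\min(t,u)\,n\log^2 n.
$$

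The main technical obstacle is justifying the Riemann sum in Case~1 uniformly across the diagonal when $d(r,s) > 1$: one must split the sum at a cut-off $|l-k| \le H$, bound the near-diagonal contribution trivially by $M \cdot H \cdot \max_h|\gamma_h(r,s)| = O(nH)$ (negligible for fixed $H$), and apply the asymptotic equivalence only on the bulk, with $H \to \infty$ after $n \to \infty$. In Case~2 the analogous issue is the degeneracy of $\log^2(k-1)$ for small $k$, but it is controlled by the trivial bound $\log^2 m \le \log^2 n$, which makes the boundary range contribute $o(n\log^2 n)$ after choosing a vanishing proportion $k \le \delta n$.
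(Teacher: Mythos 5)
Your argument is correct, and it reaches the stated asymptotics by a genuinely different route from the paper. The paper first splits $\operatorname E[S_{\fl{nt}}(r)S_{\fl{nu}}(s)]$ into the ``square'' term $\operatorname E[S_{\fl{nt}}(r)S_{\fl{nt}}(s)]$, whose asymptotics \eqref{eq:asymptotic1}--\eqref{eq:asymptotic2} are imported from Characiejus and Ra\v ckauskas~\cite{characiejus2013}, plus the cross term $\operatorname E[S_{\fl{nt}}(r)(S_{\fl{nu}}(s)-S_{\fl{nt}}(s))]$, which it rearranges via the counting identity \eqref{eq:secondterm1} into the one-dimensional weighted sums $\kappa(a,b)=\sum\gamma_k$ and $\nu(a,b)=\sum k\gamma_k$; these are then compared with integrals of monotone functions. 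You instead expand the full double sum over $[1,M]\times[1,N]$, split it by the sign of $l-k$ (which is exactly where the two constants $c(r,s)$ and $c(s,r)$ enter, via $\operatorname E[X_k(r)X_l(s)]=\gamma_{l-k}(r,s)$ or $\gamma_{k-l}(s,r)$), and pass to a two-dimensional Riemann integral of the singular kernel $|\rho-\tau|^{1-d(r,s)}$. Your evaluation of the two iterated integrals reproduces $V$ exactly, including the asymmetric constant $C(r,s;t-u)$, and your treatment of the $d=1$ case via $\sum_{m\le p}\gamma_m\sim\tfrac12\sigma(r,s)\log^2p$ is sound. What your approach buys is self-containment (you effectively reprove \eqref{eq:asymptotic1} rather than cite it) and a transparent explanation of the structure of $V$; what it costs is the extra care needed to justify the Riemann-sum limit across the integrable diagonal singularity, which you correctly identify and handle with the fixed-bandwidth cutoff $|l-k|\le H$, whose contribution is $O(nH)=o(n^{3-d(r,s)})$. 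The paper's reduction to one-dimensional sums sidesteps that issue entirely at the price of the less intuitive identity \eqref{eq:secondterm1}. One cosmetic remark: your leading term should be $\min(M,N)\gamma_0(r,s)$ in general; you may state once that you take $t\le u$, hence $M\le N$, and dispose of $t>u$ by the symmetry you already invoke.
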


\begin{remark}
Let us assume that $r=s$ and $1/2<d(s)<1$. By setting $r=s$ in Proposition \ref{prp:cov2} and using Proposition \ref{prp:cov}, we obtain that
$$
\operatorname E[\zeta_n(s,t)\zeta_n(s,u)]
	\sim\frac{\sigma^2(s)c(s)}{[1-d(s)][3-2d(s)]}\cdot\operatorname E[B_{3/2-d(s)}(t)B_{3/2-d(s)}(u)]\cdot n^{3-2d(s)},
$$
where
$$
\operatorname E[B_{3/2-d(s)}(t)B_{3/2-d(s)}(u)]=\frac12[t^{3-2d(s)}+u^{3-2d(s)}-|t-u|^{3-2d(s)}]
$$
is the covariance function of the fractional Brownian motion ${B_{3/2-d(s)}}=\{B_{3/2-d(s)}(t):t\in[0,1]\}$ with the Hurst parameter $3/2-d(s)$ and $c(s)=c(s,s)$ is given by \eqref{eq:const}.
\end{remark}
\begin{remark}\label{remark:var}
The asymptotic behaviour of the variance $\operatorname E\zeta_n^2(s,t)$ follows from Proposition \ref{prp:cov} and Proposition \ref{prp:cov2} by setting $r=s$ and $t=u$: if $1/2<d(s)<1$, then
$$
\operatorname E\zeta_n^2(s,t)\sim\frac{c(s)\sigma^2(s)}{[1-d(s)][3-2d(s)]}\cdot t^{3-2d(s)}\cdot n^{3-2d(s)};
$$
if $d(s)=1$, then
$$
\operatorname E\zeta_n^2(s,t)\sim\sigma^2(s)\cdot t\cdot n\log^2n.
$$
\end{remark}

\begin{proof}[Proof of Proposition \ref{prp:cov2}]
Suppose $t<u$ and split the cross-covariance of the partial sums into two terms
\begin{equation}\label{eq:cov}
\operatorname E\bigl[S_{\fl{nt}}(r)S_{\fl{nu}}(s)\bigr]=\operatorname E\bigl[S_{\fl{nt}}(r)S_{\fl{nt}}(s)\bigr]
	+\operatorname E\bigl[S_{\fl{nt}}(r)\bigl[S_{\fl{nu}}(s)-S_{\fl{nt}}(s)\bigr]\bigr].
\end{equation}

The following two asymptotic relations are proved in Characiejus and Ra\v ckauskas~\cite{characiejus2013}: if $1/2<d(r)<1$ and $1/2<d(s)<1$, then
\begin{equation}\label{eq:asymptotic1}
\operatorname E\bigl[S_n(r)S_n(s)\bigr]\sim\frac{[c(r,s)+c(s,r)]\sigma(r,s)}{[2-d(r,s)][3-d(r,s)]}\cdot n^{3-d(r,s)};
\end{equation}
if $d(r)=d(s)=1$, then
\begin{equation}\label{eq:asymptotic2}
\operatorname E\bigl[S_n(r)S_n(s)\bigr]\sim\sigma(r,s)\cdot n\log^2n.
\end{equation}
The asymptotic behaviour of the first term of sum \eqref{eq:cov} is established using \eqref{eq:asymptotic1} and \eqref{eq:asymptotic2}: if $1/2<d(r)<1$ and $1/2<d(s)<1$, then
\begin{equation}\label{eq:asrel=1}
\operatorname E[S_{\fl{nt}}(r)S_{\fl{nt}}(s)]\sim\frac{[c(r,s)+c(s,r)]\sigma(r,s)}{[2-d(r,s)][3-d(r,s)]}\cdot t^{3-d(r,s)}\cdot n^{3-d(r,s)};
\end{equation}
if $d(r)=d(s)=1$, then
\begin{equation}\label{eq:asrel=2}
\operatorname E[S_{\fl{nt}}(r)S_{\fl{nt}}(s)]\sim\sigma(r,s)\cdot t\cdot n\log^2n.
\end{equation}

In order to establish the asymptotic behaviour of the second term of sum~\eqref{eq:cov}, we express it in the following way
\begin{multline}\label{eq:secondterm1}
\operatorname E\bigl[S_{\fl{nt}}(r)[S_{\fl{nu}}(s)-S_{\fl{nt}}(s)]\bigr]
	=\sum_{k=1}^{m_n-1}k[\gamma_k(r,s)+\gamma_{\fl{nu}-k}(r,s)]\\
	+m_n\sum_{k=0}^{|\fl{nu}-2\fl{nt}|}\gamma_{m_n+k}(r,s),
\end{multline}
where $m_n\mathrel{\mathop:}=\min(\fl{nt},\fl{nu}-\fl{nt})$ (we also use the notation $m\mathrel{\mathop:}=\min(t,u-t)$). For simplicity, denote 
$$
\kappa(a,b)=\sum_{k=a+1}^{b}\gamma_k(r,s)\quad\text{and}\quad \nu(a,b)=\sum_{k=a+1}^{b}k\gamma_k(r,s).
$$
Then we have that
\begin{equation}\label{eq:secondterm2}
\sum_{k=1}^{m_n-1}k\gamma_{\fl{nu}-k}(r,s)=\fl{nu}\kappa(\lfloor nu\rfloor-m_n,\lfloor nu\rfloor-1)-\nu(\lfloor nu\rfloor-m_n,\lfloor nu\rfloor-1).
\end{equation}

Let us a recall a few facts about sequences. We use these facts to establish asymptotic behaviour of the sums in \eqref{eq:secondterm1} and \eqref{eq:secondterm2}. Suppose $\{a_n\}$ and $\{b_n\}$ are sequences of positive real numbers such that $a_n\sim b_n$. Then $\sum_{k=1}^na_k\sim\sum_{k=1}^nb_k$ provided either of these partial sums diverges. Let $f$ be a continuous strictly increasing or strictly decreasing function such that $f(x)/f(x+1)\to1$ as $x\to\infty$ and $\int_1^nf(x)\mathrm dx\to\infty$ as $n\to\infty$. Then $\sum_{k=1}^nf(k)\sim\int_1^nf(x)\mathrm dx$.

Since $\gamma_k(r,s)\sim c(r,s)\sigma(r,s)\cdot k^{1-d(r,s)}$ if $1/2<d(r)<1$ and $d(s)>1/2$ (see Proposition~\ref{prp:assbeh}), we obtain the following asymptotic relations using the facts about sequences mentioned above:
\begin{align}\label{eq:asrel1}
\nu(0, m_n-1)&\sim\frac{c(r,s)\sigma(r,s)m^{3-d(r,s)}}{3-d(r,s)}\cdot n^{3-d(r,s)};\\
\fl{nu}\kappa(\fl{nu}-m_n, \fl{nu}-1)&\sim\frac{c(r,s)\sigma(r,s)u[u^{2-d(r,s)}-(u-m)^{2-d(r,s)}]}{2-d(r,s)}\cdot n^{3-d(r,s)};\\
\nu(\fl{nu}-m_n, \fl{nu}-1)&\sim\frac{c(r,s)\sigma(r,s)[u^{3-d(r,s)}-(u-m)^{3-d(r,s)}]}{3-d(r,s)}\cdot n^{3-d(r,s)};
\end{align}
\begin{multline}\label{eq:asrel4}
m_n\kappa(m_n-1, m_n+|\fl{nu}-2\fl{nt}|)\sim\\
	\sim\frac{c(r,s)\sigma(r,s)m[(m+|u-2t|)^{2-d(r,s)}-m^{2-d(r,s)}]}{2-d(r,s)}\cdot n^{3-d(r,s)}.
\end{multline}
We have that
\begin{multline}\label{eq:asrelcom}
\operatorname E\bigl[S_{\fl{nt}}(r)[S_{\fl{nu}}(s)-S_{\fl{nt}}(s)]\bigr]\sim\\
	\sim\frac{c(r,s)\sigma(r,s)}{[2-d(r,s)][3-d(r,s)]}\bigl[-t^{3-d(r,s)}+u^{3-d(r,s)}-(u-t)^{3-d(r,s)}\bigr]\cdot n^{3-d(r,s)}
\end{multline}
using asymptotic relations~\eqref{eq:asrel1}-\eqref{eq:asrel4}. Combining \eqref{eq:asrel=1} with \eqref{eq:asrelcom}, we obtain
\begin{multline*}
\operatorname E[S_{\fl{nt}}(r)S_{\fl{nu}}(s)]\sim\\
\sim\frac{\sigma(r,s)}{[2-d(r,s)][3-d(r,s)]}\bigr[c(s,r)t^{3-d(r,s)}+c(r,s)[u^{3-d(r,s)}-(u-t)^{3-d(r,s)}]\bigl]\cdot n^{3-d(r,s)}.
\end{multline*}

Similarly, if $d(r)=d(s)=1$, then $\gamma_k(r,s)\sim \sigma(r,s)\cdot k^{-1}\log k$
(see Proposition~\ref{prp:assbeh})
and the following asymptotic relations are true
\begin{align}
\label{eq:asrel=1,1}
\nu(0, m_n-1)&\sim\sigma(r,s)m\cdot n\log n;\\
\fl{nu}\kappa(\fl{nu}-m_n, \fl{nu}-1)&\sim\sigma(r,s)[\log u-\log(u-m)]u\cdot n\log n;\\
\nu(\fl{nu}-m_n, \fl{nu}-1)&\sim\sigma(r,s)m\cdot n\log n;\\
\label{eq:asrel=1,4}
m_n\kappa(m_n-1, m_n+|\fl{nu}-2\fl{nt}|)&\sim\sigma(r,s)[\log(m+|u-2t|)-\log m]m\cdot n\log n.
\end{align}
Since sequences \eqref{eq:asrel=1,1}-\eqref{eq:asrel=1,4} grow slower than sequence \eqref{eq:asrel=2}, we conclude that
$$\operatorname E\bigl[S_{\fl{nt}}(r)S_{\fl{nu}}(s)\bigr]\sim\sigma(r,s)\cdot t\cdot n\log^2n.$$

If $t>u$, the proof is exactly the same as in the case of $t<u$. If $t=u$, then we just use asymptotic relations \eqref{eq:asrel=1} and \eqref{eq:asrel=2}. The proof of Proposition~\ref{prp:cov2} is complete.
\end{proof}

\begin{proof}[Proof of Proposition \ref{prp:cov}]
We have that
\begin{align*}
\operatorname E[\zeta_n(r,t)\zeta_n(s,u)]&=\operatorname E[S_{\fl{nt}}(r)S_{\fl{nu}}(s)]\\
	&\qquad+\{nu\}\operatorname E[S_{\fl{nt}}(r)X_{\fl{nu}+1}(s)]\\
	&\qquad+\{nt\}\operatorname E[S_{\fl{nu}}(s)X_{\fl{nt}+1}(r)]\\
	&\qquad+\{nt\}\{nu\}\operatorname E[X_{\fl{nt}+1}(r)X_{\fl{nu}+1}(s)]
\end{align*}
and
$$\operatorname E[S_{\fl{nt}}(r)X_{\fl{nu}+1}(s)]\le\fl{nt}\gamma_0(r,s).$$
The result follows from Proposition \ref{prp:cov2} since $\operatorname E[S_{\fl{nt}}(r)S_{\fl{nu}}(s)]$ is the only term in the expression of  $\operatorname E[\zeta_n(r,t)\zeta_n(s,u)]$ that grows faster than linearly.
\end{proof}

\section{Operator self-similar process}\label{sec:oss}
In this section, we show that there exists a Gaussian stochastic process $\mathcal X=\{\mathcal X(s,t):(s,t)\in\mathbb T\}$ with zero mean and covariance function $V$ given by \eqref{eq:covV}.  The stochastic process $\{\mathcal X(\cdot,t):t\in[0,\infty)\}$ is an operator self-similar process with values in $L_2(\mu)$.

We begin by showing that the function $V$ is a covariance function.
\begin{prp}
\label{prp:covV}
The function $V: \mathbb T\times\mathbb T\to\mathbb R$, given by \eqref{eq:covV}, with $d\in (1/2, 1)$
is a covariance function of a stochastic process indexed by the set $\mathbb T$.
\end{prp}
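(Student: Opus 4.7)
The plan is to verify that $V$ is symmetric and positive semi-definite, at which point the existence of a centered Gaussian process with covariance $V$ follows from Kolmogorov's extension theorem. Symmetry is a direct check from \eqref{eq:covV}: $\sigma(r,s)$, $d(r,s)$ and $|t-u|$ are manifestly invariant under the swap $(r,t)\leftrightarrow(s,u)$, and the case analysis in the definition of $C$ gives $C(r,s;t-u) = C(s,r;u-t)$ immediately (both equal $c(r,s)$ when $t<u$ and $c(s,r)$ when $t>u$), so the two cross terms $c(s,r)\,t^{3-d(r,s)}$ and $c(r,s)\,u^{3-d(r,s)}$ merely trade places under the swap while the remainder is already symmetric.

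For positive semi-definiteness I would use the representation of $V$ as a pointwise limit of finite covariance matrices of the rescaled polygonal processes. Combining Propositions~\ref{prp:cov} and~\ref{prp:cov2}, one reads off
$$V((r,t),(s,u)) = \lim_{n\to\infty} n^{-(3-d(r,s))}\,\operatorname E\bigl[\zeta_n(r,t)\zeta_n(s,u)\bigr]$$
(with the understanding that in the degenerate cases $\sigma(r,s)=0$ or $tu=0$ both sides vanish identically, which is handled trivially). The crucial structural observation is that the normalizing exponent factorizes as $3-d(r,s) = H(r) + H(s)$ with $H(r) := 3/2 - d(r)$. Hence for any finite collection $(r_1,t_1),\ldots,(r_k,t_k)\in\mathbb T$ and any scalars $\alpha_1,\ldots,\alpha_k\in\mathbb R$, setting $\beta_i^{(n)} := \alpha_i\,n^{-H(r_i)}$ I obtain
$$\sum_{i,j=1}^k \alpha_i\alpha_j\,V((r_i,t_i),(r_j,t_j)) = \lim_{n\to\infty}\operatorname E\biggl[\Bigl(\sum_{i=1}^k \beta_i^{(n)}\,\zeta_n(r_i,t_i)\Bigr)^{\!2}\,\biggr] \ge 0,$$
as a limit of non-negative quantities.

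The main technical obstacle is that Propositions~\ref{prp:cov} and~\ref{prp:cov2} are stated for $t,u\in[0,1]$, whereas $\mathbb T$ allows arbitrary $t,u\in[0,\infty)$. I would resolve this either by observing that the asymptotic arguments in the proofs of those propositions work verbatim on any bounded $t$-interval (the restriction to $[0,1]$ is inessential), or equivalently by invoking the scaling identity $V((r,at),(s,au)) = a^{3-d(r,s)}\,V((r,t),(s,u))$ for $a>0$, which follows from \eqref{eq:covV} together with the elementary fact that $a(t-u)$ and $t-u$ have the same sign; this reduces positive semi-definiteness for finite collections in $\mathbb T$ to the case already covered by the two propositions.
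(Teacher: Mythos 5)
Your proposal is correct and follows essentially the same route as the paper: symmetry by inspection, positive semi-definiteness by writing the quadratic form as a limit of variances of $\sum_i\alpha_i n^{-(3/2-d(r_i))}\zeta_n(r_i,t_i)$ using Propositions~\ref{prp:cov} and~\ref{prp:cov2} together with the factorization $3-d(r,s)=(3/2-d(r))+(3/2-d(s))$, and the scaling identity $V((r,at),(s,au))=a^{3-d(r,s)}V((r,t),(s,u))$ (the paper takes $a=1/\max_i t_i$) to reduce from $[0,\infty)$ to $[0,1]$. Your explicit handling of the degenerate cases $\sigma(r,s)=0$ or $tu=0$, where the asymptotic equivalence $\sim$ is vacuous, is a minor point the paper leaves implicit.
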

\begin{proof}
It follows from equation \eqref{eq:covV} that the function $V$ is symmetric, i.e.\ 
$$V(\tau,\tau')=V(\tau',\tau),\quad\tau,\tau'\in\mathbb T.$$ 
So we need to prove that  the function $V$ is positive definite. Let $N\in\mathbb N$, $\tau_i=(s_i,t_i)\in\mathbb T$ and $w_i\in\mathbb R$, where $i\in\{1,\ldots,N\}$. Denote $M=\max\{t_1,\ldots,t_N\}$ and $\widetilde w_i=w_iM^{3/2-d(s_i)}$, $i\in\{1,\ldots,N\}$. Using equation~\eqref{eq:covV} and  Propositions~\ref{prp:cov} and~\ref{prp:cov2}, we obtain that
\begin{align*}
\sum_{i=1}^N\sum_{j=1}^Nw_iw_jV(\tau_i,\tau_j)
	&=\sum_{i=1}^N\sum_{j=1}^N w_i w_jM^{3-[d(s_i)+d(s_j)]}V((s_i,t_i/M),(s_j,t_j/M))\\
	&=\sum_{i=1}^N\sum_{j=1}^N\widetilde w_i\widetilde w_j\lim_{n\to\infty}\frac1{n^{3-[d(s_i)+d(s_j)]}}\operatorname E[\zeta_n(s_i,t_i/M)\zeta_n(s_j,t_j/M)]\ge0
\end{align*}
since
$$\frac1{n^{3-d(r,s)}}\operatorname E[\zeta_n(r,t)\zeta_n(s,u)]$$
is a covariance function for all $(r,t),(s,u)\in\mathbb S\times[0,1]$ and for all $n\in\mathbb N$.
\end{proof}

Let us recall that a  random element $\xi$ with values in a separable Banach space $\mathbb E$ is Gaussian if for any continuous linear functional $f$ on $\mathbb E$, $f(\xi)$ is real valued Gaussian random variable. A stochastic process $\{\xi_t:t\in T\}$ with values in $\mathbb E$  is  Gaussian if each finite linear combination $\sum_i\alpha_i\xi_{t_i}$, $\alpha_i\in\mathbb R$, $t_i\in T$, is Gaussian random element in $\mathbb E$ (for more details about Gaussian random elements and Gaussian stochastic processes with values in Banach spaces, see the textbook by Ledoux and Talagrand~\cite{ledoux1991}).

We have the following corollary of Proposition \ref{prp:covV}.
\begin{cor*}
There exists a zero mean Gaussian stochastic process $\mathcal X=\{\mathcal X(s,t):(s, t)\in \mathbb T\}$ with the covariance function $V$ given by \eqref{eq:covV}.
\end{cor*}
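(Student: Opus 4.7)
The plan is to invoke the Kolmogorov consistency (extension) theorem, using Proposition \ref{prp:covV} as the one nontrivial input. Since that proposition already establishes that $V$ is symmetric and positive definite on $\mathbb T\times\mathbb T$, the only remaining task is to turn consistent Gaussian finite-dimensional distributions into an actual process on a probability space.

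First, for any finite collection $\tau_1,\ldots,\tau_N\in\mathbb T$, the matrix $\Sigma_{\tau_1,\ldots,\tau_N}=\bigl(V(\tau_i,\tau_j)\bigr)_{i,j=1}^N$ is symmetric and positive semi-definite by Proposition \ref{prp:covV}. Therefore the centered Gaussian law $\mu_{\tau_1,\ldots,\tau_N}$ on $\mathbb R^N$ with covariance matrix $\Sigma_{\tau_1,\ldots,\tau_N}$ is well defined. I would then verify that the family $\{\mu_{\tau_1,\ldots,\tau_N}\}$ is consistent in the sense of Kolmogorov: permutation invariance is immediate from symmetry of $V$, and the marginalization property follows from the standard fact that any sub-vector of a centered multivariate Gaussian is again centered Gaussian with the corresponding principal submatrix as its covariance.

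Having the consistent family in hand, Kolmogorov's extension theorem applied to $\mathbb R^{\mathbb T}$ delivers a probability space $(\Omega,\mathcal F,P)$ and a real-valued process $\mathcal X=\{\mathcal X(s,t):(s,t)\in\mathbb T\}$ whose finite-dimensional distributions are exactly the $\mu_{\tau_1,\ldots,\tau_N}$. By construction $\operatorname E\mathcal X(s,t)=0$ and $\operatorname E[\mathcal X(\tau)\mathcal X(\tau')]=V(\tau,\tau')$ for all $\tau,\tau'\in\mathbb T$. Moreover, for any $N\in\mathbb N$, any $\alpha_1,\ldots,\alpha_N\in\mathbb R$ and any $\tau_1,\ldots,\tau_N\in\mathbb T$, the linear combination $\sum_{i=1}^N\alpha_i\mathcal X(\tau_i)$ is a real linear image of the Gaussian vector $(\mathcal X(\tau_1),\ldots,\mathcal X(\tau_N))$ and hence Gaussian, so $\mathcal X$ is a Gaussian process in the sense recalled just before the statement.

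I do not anticipate a substantive obstacle: the hard analytic content is exactly the positive-definiteness of $V$, which has already been secured in Proposition \ref{prp:covV} via the asymptotic identification of $V$ as a limit of bona fide covariances $n^{-[3-d(r,s)]}\operatorname E[\zeta_n(r,t)\zeta_n(s,u)]$. What remains is a routine invocation of Kolmogorov's theorem plus the one-line observation that linear images of Gaussian vectors are Gaussian.
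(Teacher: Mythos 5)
Your proposal is correct and matches the paper's intent: the paper states this as an immediate corollary of Proposition~\ref{prp:covV} without further proof, the implicit argument being exactly the standard Kolmogorov extension construction of a centered Gaussian process from a symmetric positive semi-definite covariance function. Your write-up simply makes that routine step explicit.
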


Next we describe the sample path properties of the stochastic process $\mathcal X.$  First we consider for each $t\in [0, \infty)$ the stochastic process $\{\mathcal X(s,t):s\in\mathbb S\}$. 
\begin{prp}\label{prp:samplepaths1}
If  $d\in (1/2, 1)$ and the integrals
$$
\int_{\mathbb S}\frac{\sigma^2(v)}{[1-d(v)]^2}\mu(\mathrm dv)\quad\text{and}\quad\int_{\mathbb S}\frac{\sigma^2(v)}{[1-d(v)][2d(v)-1]}\mu(\mathrm dv)
$$
are finite, then for each $t\in [0, \infty)$ the stochastic process $\{\mathcal X(s, t):s\in \mathbb S\}$ has sample paths in $\mathcal L_2(\mu)$
and induces a Gaussian random element  with values in  $L_2(\mu)$ which is denoted  by $\mathcal X(\cdot,t)$. Moreover, the process $\{\mathcal X(\cdot,t): t\in [0, \infty)\}$ with values in $L_2(\mu)$ is Gaussian.
\end{prp}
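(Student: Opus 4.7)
The plan is threefold: first, verify that $\mathbb E\|\mathcal X(\cdot,t)\|^2 < \infty$ for each fixed $t$; second, use this together with the Cremers--Kadelka framework cited after Proposition~\ref{prp:samplepaths} to pass from the measurable process to an $L_2(\mu)$-valued random element; third, transfer Gaussianity of the finite-dimensional distributions of $\mathcal X$ into Gaussianity as random elements by approximating inner products $\langle\mathcal X(\cdot,t),f\rangle$ via finite linear combinations of coordinate values.

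First I would compute $V((s,t),(s,t))$ by specializing \eqref{eq:covV} to $r=s$ and $u=t$. Since $d(s,s)=2d(s)<2$, the term $|t-u|^{3-d(r,s)}$ vanishes, yielding
$$V((s,t),(s,t))=\frac{c(s)\sigma^2(s)\,t^{3-2d(s)}}{[1-d(s)][3-2d(s)]}.$$
Invoking the bound \eqref{eq:c(s)} on $c(s)$ and noting that $3-2d(s)\in(1,2)$ gives $1/(3-2d(s))\le 1$, so for each fixed $t\in[0,\infty)$ the diagonal is dominated by a $t$-dependent constant times
$$\frac{\sigma^2(s)}{[1-d(s)]^2}+\frac{\sigma^2(s)}{[1-d(s)][2d(s)-1]},$$
whose $\mu$-integrals are finite by hypothesis. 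After selecting a jointly measurable modification of the centered Gaussian process $\mathcal X$ (which exists by the standard separable/measurable modification theorem for Gaussian processes), Tonelli yields
$$\mathbb E\int_{\mathbb S}\mathcal X(s,t)^2\mu(\mathrm ds)=\int_{\mathbb S}V((s,t),(s,t))\mu(\mathrm ds)<\infty,$$
so the sample paths lie in $\mathcal L_2(\mu)$ almost surely; by the argument of Cremers and Kadelka~\cite{cremers1984} recalled after Proposition~\ref{prp:samplepaths}, this induces an $\mathcal F$-$\mathcal B(L_2(\mu))$-measurable map, i.e.\ an $L_2(\mu)$-valued random element $\mathcal X(\cdot,t)$.

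Next I would establish Gaussianity of $\mathcal X(\cdot,t)$ in $L_2(\mu)$. Fix $f\in L_2(\mu)$ and approximate it in norm by a sequence of simple functions $f_n$. Each $\langle\mathcal X(\cdot,t),f_n\rangle$ is, after a further approximation of the integrals $\int_{A}\mathcal X(s,t)\mu(\mathrm ds)$ over level sets $A$ by Riemann-type sums, an $L^2(\Omega)$-limit of finite linear combinations of the values $\mathcal X(s,t)$; these are jointly Gaussian by construction of $\mathcal X$, so $\langle\mathcal X(\cdot,t),f_n\rangle$ is centered Gaussian. By Cauchy--Schwarz, $\mathbb E[\langle\mathcal X(\cdot,t),f-f_n\rangle^2]\le\mathbb E\|\mathcal X(\cdot,t)\|^2\cdot\|f-f_n\|^2\to 0$, and since the class of centered Gaussian variables is closed under $L^2(\Omega)$ limits, $\langle\mathcal X(\cdot,t),f\rangle$ is centered Gaussian. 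By the definition recalled before the corollary, $\mathcal X(\cdot,t)$ is then a Gaussian random element in $L_2(\mu)$.

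For the process $\{\mathcal X(\cdot,t):t\in[0,\infty)\}$ to be Gaussian, fix arbitrary $\alpha_1,\ldots,\alpha_m\in\mathbb R$ and $t_1,\ldots,t_m\in[0,\infty)$, and apply the same simple-function approximation to $Y=\sum_i\alpha_i\mathcal X(\cdot,t_i)$: the sample paths of $Y$ lie in $\mathcal L_2(\mu)$ almost surely (each summand does), and for any $f\in L_2(\mu)$, $\langle Y,f\rangle$ is an $L^2(\Omega)$-limit of finite linear combinations of values $\mathcal X(s,t_i)$, which are jointly Gaussian by construction of $\mathcal X$, hence $\langle Y,f\rangle$ is Gaussian. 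The only nontrivial step I anticipate is justifying the existence of a jointly measurable modification of $\mathcal X$ in $(s,\omega)$ so that Tonelli applies; once this is secured, the computation of the diagonal of $V$ and the simple-function approximation for Gaussianity are routine.
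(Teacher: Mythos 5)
Your proof is correct and follows essentially the same route as the paper: specialize $V$ to the diagonal to get $\operatorname E\int_{\mathbb S}\mathcal X^2(v,t)\mu(\mathrm dv)=\int_{\mathbb S}\frac{c(v)\sigma^2(v)}{[1-d(v)][3-2d(v)]}t^{3-2d(v)}\mu(\mathrm dv)$, bound $c$ via \eqref{eq:c(s)} and $t^{3-2d(v)}\le\max\{t,t^2\}$, and conclude by Tonelli that the paths lie in $\mathcal L_2(\mu)$. The extra care you take with the jointly measurable modification and with deducing Gaussianity of $\langle\mathcal X(\cdot,t),f\rangle$ by $L^2(\Omega)$-approximation only fills in steps the paper passes over (it simply writes ``Clearly it is a Gaussian one''), so it is a welcome elaboration rather than a different argument.
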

\begin{proof}
Since we have that
\begin{align*}
\operatorname E\int_\mathbb S\mathcal X^2(v,t)\mu(\mathrm{d}v)
	&=\int_\mathbb S\frac{\sigma^2(v)c(v)}{[1-d(v)][3-2d(v)]}\cdot t^{3-2d(v)}\mu(\mathrm dv)\\
	&\le\max\{t,t^2\}\Bigl[\int_\mathbb S\frac{\sigma^2(v)}{[1-d(v)]^2}\mu(\mathrm dv)+\int_\mathbb S\frac{\sigma^2(v)}{[1-d(v)][2d(v)-1]}\mu(\mathrm dv)\Bigr]
\end{align*}
using inequality \eqref{eq:c(s)} to estimate $c(s)$ from above, the sample paths of the stochastic process $\{\mathcal X(s,t):s\in\mathbb S\}$ almost surely belong to the space $\mathcal L_2(\mu)$ for each $t\in[0,\infty)$. Hence $\mathcal X(\cdot,t)$ is a random element in $L_2(\mu)$. Clearly it is a Gaussian one.
\end{proof}

Finally, we show that the stochastic process $\{\mathcal X(\cdot,t):t\in[0,\infty)\}$ is operator self-similar.
\begin{prp}\label{prp:ss}
The stochastic process $\{\mathcal X(\cdot,t):t\in[0,\infty)\}$ is operator self-similar with scaling family of operators $\{a^H:a>0\}$ where $a^H$, $a>0$, is a multiplication operator defined by $a^Hf=\{a^{3/2-d(s)}f(s):s\in\mathbb S\}$ for  $f\in L_2(\mu)$.
\end{prp}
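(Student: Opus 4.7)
The plan is to reduce the operator-self-similarity identity to a scalar identity for the covariance function $V$ and then verify that identity directly from formula \eqref{eq:covV}. Since $\mathcal X=\{\mathcal X(\cdot,t):t\in[0,\infty)\}$ is a zero-mean Gaussian process with values in $L_2(\mu)$ (Proposition~\ref{prp:samplepaths1}), and since for each $a>0$ the multiplication operator $a^H\in L(L_2(\mu))$ is bounded, the process $\{a^H\mathcal X(\cdot,t):t\in[0,\infty)\}$ is also a zero-mean Gaussian process with values in $L_2(\mu)$. To establish the required equality in finite-dimensional distributions it therefore suffices to check that the two processes have the same covariance structure, i.e.\ that for all $f,g\in L_2(\mu)$ and all $t,u\ge 0$,
$$
\operatorname E\bigl[\langle\mathcal X(\cdot,at),f\rangle\langle\mathcal X(\cdot,au),g\rangle\bigr]
=\operatorname E\bigl[\langle a^H\mathcal X(\cdot,t),f\rangle\langle a^H\mathcal X(\cdot,u),g\rangle\bigr].
$$

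The first step is to express both sides as double integrals against the pointwise covariance $V$. By Fubini and the fact that $a^H$ is the self-adjoint multiplication operator by $a^{3/2-d(\cdot)}$, the left-hand side equals
$$
\int_\mathbb S\int_\mathbb S f(r)g(s)\,V((r,at),(s,au))\,\mu(\mathrm dr)\mu(\mathrm ds),
$$
while the right-hand side equals
$$
\int_\mathbb S\int_\mathbb S f(r)g(s)\,a^{3-d(r,s)}\,V((r,t),(s,u))\,\mu(\mathrm dr)\mu(\mathrm ds).
$$
Hence the problem reduces to verifying the scalar identity
$$
V((r,at),(s,au))=a^{3-d(r,s)}\,V((r,t),(s,u))\qquad\text{for all }r,s\in\mathbb S,\ t,u\ge 0,\ a>0.
$$

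The second step is to check this identity directly from \eqref{eq:covV}. Since $a>0$, we have $(at)^{3-d(r,s)}=a^{3-d(r,s)}t^{3-d(r,s)}$, $(au)^{3-d(r,s)}=a^{3-d(r,s)}u^{3-d(r,s)}$, and $|at-au|^{3-d(r,s)}=a^{3-d(r,s)}|t-u|^{3-d(r,s)}$; moreover $\operatorname{sgn}(at-au)=\operatorname{sgn}(t-u)$, so $C(r,s;at-au)=C(r,s;t-u)$. Substituting into the expression for $V((r,at),(s,au))$ and factoring out $a^{3-d(r,s)}$ yields $a^{3-d(r,s)}V((r,t),(s,u))$, as required.

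Combining these two steps, for any $t_1,\dots,t_N\ge 0$ the joint $L_2(\mu)^N$-valued Gaussian distributions of $(\mathcal X(\cdot,at_1),\dots,\mathcal X(\cdot,at_N))$ and $(a^H\mathcal X(\cdot,t_1),\dots,a^H\mathcal X(\cdot,t_N))$ have the same mean (zero) and the same covariance bilinear form on $L_2(\mu)^N$, hence coincide; this gives the claimed equality in finite-dimensional distributions. No step presents a real obstacle: the only substantive computation is the explicit scaling check in \eqref{eq:covV}, which is essentially a one-line calculation because every time-dependent term on the right-hand side of \eqref{eq:covV} is homogeneous of degree $3-d(r,s)$ in its time arguments and the sign-dependent factor $C(r,s;t-u)$ is invariant under multiplication of its argument by $a>0$.
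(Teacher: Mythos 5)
Your proof is correct and follows essentially the same route as the paper: reduce to equality of covariance structures for two zero-mean Gaussian processes, test against $f,g\in L_2(\mu)$ via Fubini, and invoke the scalar scaling identity $V((r,at),(s,au))=a^{3-d(r,s)}V((r,t),(s,u))$, which is exactly the paper's equation~\eqref{eq:sscov}. The only difference is that you spell out the homogeneity check of \eqref{eq:covV} (including the sign-invariance of $C(r,s;\cdot)$) that the paper leaves implicit.
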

\begin{proof}
We need to show that
\begin{equation}\label{eq:randomprocesses}
\{\mathcal X(\cdot,at):t\in[0,\infty)\}\overset{fdd}=\{a^H\mathcal X(\cdot,t):t\in[0,\infty)\}.
\end{equation}

Since stochastic processes on both sides of equality \eqref{eq:randomprocesses} are zero-mean Gaussian stochastic processes, we only need to show that their covariance structure is the same. Using the fact that two operators $A$ and $B$ are equal if and only if $\langle Af,g\rangle=\langle Bf,g\rangle$ for all $f,g\in L_2(\mu)$ and the fact that
\begin{equation}
\label{eq:sscov}
\operatorname E[a^{3/2-d(r)}\mathcal X(r,t)a^{3/2-d(s)}\mathcal X(s,u)]=E[\mathcal X(r,at)\mathcal X(s,au)]
\end{equation}
for all $r,s\in\mathbb S$ and $t,u\in[0,\infty)$ (equality \eqref{eq:sscov} follows from equation~\eqref{eq:covV}),
we conclude the proof by showing that
\begin{align*}
\langle\operatorname E[\langle a^H\mathcal X(\cdot,t),f\rangle a^H\mathcal X(\cdot,u)],g\rangle
	&=\int_\mathbb S\operatorname E\biggl[\biggl(\int_\mathbb S a^{3/2-d(u)}\mathcal X(u,t)f(u)\mu(\mathrm du)\biggr)a^{3/2-d(r)}\mathcal X(r,u)\biggr]g(r)\mu(\mathrm dr)\\
	&=\int_\mathbb S\biggl(\int_\mathbb S\operatorname E[a^{3/2-d(u)}\mathcal X(u,t)a^{3/2-d(r)}\mathcal X(r,u)]f(u)\mu(\mathrm du)\biggr)g(r)\mu(\mathrm dr)\\
	&=\langle\operatorname E[\langle\mathcal X(\cdot,at),f\rangle\mathcal X(\cdot,au)],g\rangle
\end{align*}
for all $f,g\in L_2(\mu)$.
\end{proof}

\section{Main results}\label{sec:fclt}
\subsection{Functional central limit theorem}
We shall consider $\{\zeta_n\}$ as random elements in a separable Banach space $C([0,1]; L_2(\mu))$ of continuous functions $f:[0,1]\to L_2(\mu)$ endowed with the norm
$$
\|f\|=\sup_{t\in[0,1]}\Bigl[\int_\mathbb Sf^2(v,t)\mu(\mathrm dv)\Bigr]^{1/2},\quad f\in C([0,1]; L_2(\mu)).
$$

Before stating sufficient conditions for the functional central limit theorem, we define the limit Gaussian processes
$$
\mathcal G=\{\mathcal G(s,t):(s,t)\in\mathbb S\times[0,1]\}\quad\text{and}\quad \mathcal G'=\{\mathcal G'(s,t):(s,t)\in\mathbb S\times[0,1]\}.
$$
Let the stochastic process $\mathcal G$ be a restriction to $\mathbb S\times[0,1]$ of the stochastic process $\mathcal X=\{\mathcal{X}(s,t):(s,t)\in\mathbb S\times [0, \infty)\}$ defined in Section \ref{sec:oss}. Let the stochastic process $\mathcal G'$ be Gaussian with  the covariance function $\operatorname E[\mathcal G'(r,t)\mathcal G'(s,u)]=\sigma(r,s)\min(t,u)$, $(r,t),(s,u)\in\mathbb S\times[0,1]$. If the integral $\int_\mathbb S\sigma^2(v)\mu(\mathrm dv)$ is finite, then for each $t\in[0,1]$ the sample paths of the stochastic process $\{\mathcal G'(s,t):s\in\mathbb S\}$ belong to the space $\mathcal L_2(\mu)$ (the proof is basically the same as the proof of Proposition~\ref{prp:samplepaths1}).

The following proposition establishes conditions under which  both of the stochastic processes $\{\mathcal G(\cdot,t):t\in[0,1]\}$ and $\{\mathcal G'(\cdot,t):t\in [0,1]\}$ with values in the space $L_2(\mu)$  have continuous versions.
\begin{prp}
If the integrals
$$
\int_{\mathbb S}\frac{\sigma^2(v)}{[1-d(v)]^2}\mu(\mathrm dv)\quad\text{and}\quad\int_{\mathbb S}\frac{\sigma^2(v)}{[1-d(v)][2d(v)-1]}\mu(\mathrm dv)
$$
are finite, then the $L_2(\mu)$-valued stochastic process  $\{\mathcal G(\cdot, t):t\in [0, 1]\}$ has a continuous version.

If the integral
$$\int_\mathbb S\sigma^2(v)\mu(\mathrm dv)$$
is finite, then the $L_2(\mu)$-valued stochastic process $\{\mathcal G'(\cdot, t):t\in [0, 1]\}$  has a continuous version.
\end{prp}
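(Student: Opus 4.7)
The plan is to verify a Kolmogorov-type moment condition on the increments and then invoke the standard continuity criterion for Banach-space-valued processes. The key reduction, applicable to both $\mathcal G$ and $\mathcal G'$, is Fernique's equivalence of Gaussian moments: for any centered Gaussian random element $\xi$ in the separable Hilbert space $L_2(\mu)$ and any integer $p\ge 1$,
\begin{equation*}
\operatorname E\|\xi\|^{2p}\le C_p\,(\operatorname E\|\xi\|^2)^p
\end{equation*}
(see Ledoux and Talagrand~\cite{ledoux1991}). Hence, once I have a second-moment estimate of the form $\operatorname E\|\mathcal G(\cdot,t)-\mathcal G(\cdot,u)\|^2\le C\,|t-u|^\alpha$ with some $\alpha>0$, choosing $p$ with $p\alpha>1$ gives $\operatorname E\|\mathcal G(\cdot,t)-\mathcal G(\cdot,u)\|^{2p}\le C'\,|t-u|^{1+\delta}$ for $\delta=p\alpha-1>0$, and the Kolmogorov continuity criterion furnishes a modification with continuous paths in $L_2(\mu)$. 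The same route will handle $\mathcal G'$.

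For $\mathcal G$ I would specialize \eqref{eq:covV} to $r=s=v$ and expand the second moment of the increment to obtain
\begin{equation*}
\operatorname E[\mathcal G(v,t)-\mathcal G(v,u)]^2=\frac{\sigma^2(v)\,c(v)}{[1-d(v)][3-2d(v)]}\,|t-u|^{3-2d(v)}.
\end{equation*}
Since $d(v)\in(1/2,1)$ implies $3-2d(v)>1$, for $|t-u|\le 1$ the crude bound $|t-u|^{3-2d(v)}\le|t-u|$ is available. Combining inequality~\eqref{eq:c(s)} with $3-2d(v)>1$ yields
\begin{equation*}
\frac{c(v)}{[1-d(v)][3-2d(v)]}\le\frac{1}{[1-d(v)]^2}+\frac{1}{[1-d(v)][2d(v)-1]}.
\end{equation*}
Integrating this against $\mu$ and invoking the two hypothesized finite integrals produces $\operatorname E\|\mathcal G(\cdot,t)-\mathcal G(\cdot,u)\|^2\le C\,|t-u|$ for $|t-u|\le 1$, i.e.\ the required estimate with $\alpha=1$; the Gaussian moment step with $p=2$ then gives the Kolmogorov condition with $\delta=1$.

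For $\mathcal G'$ the diagonal second moment is the elementary $\operatorname E[\mathcal G'(v,t)-\mathcal G'(v,u)]^2=\sigma^2(v)\,|t-u|$, and Fubini (justified because the paths almost surely lie in $\mathcal L_2(\mu)$, as explained in the text preceding the proposition) produces $\operatorname E\|\mathcal G'(\cdot,t)-\mathcal G'(\cdot,u)\|^2=|t-u|\int_{\mathbb S}\sigma^2(v)\,\mu(\mathrm dv)$, and the Gaussian moment step closes the argument as before. The only real subtlety I anticipate is the $v$-dependence of the exponent $3-2d(v)$, which can approach $1$ when $d(v)$ is close to $1$ on a set of positive measure; the device of restricting to $|t-u|\le 1$ to downgrade the exponent to the uniform value $1$ sidesteps this at negligible cost, absorbed into the constant.
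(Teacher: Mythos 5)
Your argument is correct and follows essentially the same route as the paper: both establish the second-moment bound $\operatorname E\|\mathcal G(\cdot,t)-\mathcal G(\cdot,u)\|^2\le C|t-u|$ by specializing \eqref{eq:covV} to the diagonal, bounding $c(v)/([1-d(v)][3-2d(v)])$ via \eqref{eq:c(s)} and using $3-2d(v)>1$ with $|t-u|\le1$, then upgrade to a fourth-moment bound by the Gaussian moment comparison inequality (the paper's \eqref{eq:gaussianmoments} with $p=4$, $q=2$) and apply Kolmogorov's continuity theorem; the $\mathcal G'$ case is handled identically in both. No gaps.
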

\begin{proof}
We use the following inequality for the moments of a Gaussian random element $\xi$ with values in a separable Banach space:
\begin{equation}
\label{eq:gaussianmoments}
(\operatorname E\|\xi\|^p)^{1/p}\le K_{p,q}(\operatorname E\|\xi\|^q)^{1/q},
\end{equation}
where $0<p,q<\infty$ and $K_{p,q}$ is a constant depending on $p$ and $q$ only (for the proof, see Ledoux and Talagrand~\cite{ledoux1991}, p.~59, Corollary~3.2).

Using Kolmogorov's continuity theorem (see the textbook by Kallenberg~\cite{kallenberg1997},  p.~35, Theorem~2.23), inequality \eqref{eq:c(s)} to estimate $c(s)$ from above and inequalities
\begin{align*}
\operatorname E\|\mathcal G(\cdot,t)-\mathcal G(\cdot,u)\|^4
	&\le K_{4,2}^4(\operatorname E\|\mathcal G(\cdot,t)-\mathcal G(\cdot,u)\|^2)^2\\
	&<K_{4,2}^4\Bigl[\int_{\mathbb S}\frac{\sigma^2(v)}{[1-d(v)]^2}\mu(\mathrm dv)+\int_{\mathbb S}\frac{\sigma^2(v)}{[1-d(v)][2d(v)-1]}\mu(\mathrm dv)\Bigr]^2\cdot|t-u|^2
\end{align*}
and
$$
\operatorname E\|\mathcal G'(\cdot,t)-\mathcal G'(\cdot,u)\|^4\le K_{4,2}^4\Bigl[\int_\mathbb S\sigma^2(v)\mu(\mathrm dv)\Bigr]^2|t-u|^2,
$$
we conclude that the processes $\{\mathcal G(\cdot, t), t\in [0, 1]\}$ and$\{\mathcal G'(\cdot, t), t\in [0, 1]\}$ have continuous versions.
\end{proof}
Passing to continuous versions, we thus consider Gaussian stochastic processes $\mathcal G$ and $\mathcal G'$ as Gaussian random elements in the space $C([0,1];L_2(\mu))$. Clearly $\mathcal G'$ is an $L_2(\mu)$-valued Wiener process.

Now we are ready to state our main results. As usual $\xrightarrow{\mathcal D}$ denotes the convergence in distribution.
\begin{thm}\label{thm:fclt1}
Suppose that $d\in(1/2,1)$, the integrals
$$
\operatorname E\Bigl[\int_\mathbb S\frac{\varepsilon_0^2(v)}{[1-d(v)]^2}\mu(\mathrm dv)\Bigr]^{p/2}\quad\text{and}\quad\int_\mathbb S\frac{\sigma^2(v)}{[1-d(v)][2d(v)-1]}\mu(\mathrm dv)
$$
are finite and either $p=2$ and $\bar d=\esssup d<1$ or $p>2$. Then we have that
$$n^{-H}\zeta_n\xrightarrow{\mathcal D}\mathcal G\quad\text{as}\quad n\to\infty$$
in the space  $C([0,1];L_2(\mu))$, where $\{n^{-H}\}$ is a sequence of multiplication operators given by $n^{-H}f=\{n^{-[3/2-d(s)]}f(s):s\in\mathbb S\}$ for  $f\in L_2(\mu).$
\end{thm}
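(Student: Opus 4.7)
The plan is to follow the classical two-part route for weak convergence in $C([0,1]; L_2(\mu))$: first establish convergence of the finite-dimensional distributions of $n^{-H}\zeta_n$ to those of $\mathcal G$, and then prove tightness of $\{n^{-H}\zeta_n\}$.

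For the finite-dimensional distributions I would apply the Cram\'er--Wold device: fix $0 \le t_1 < \cdots < t_k \le 1$ and $f_1, \ldots, f_k \in L_2(\mu)$ and study the scalar variable $\xi_n = \sum_{i=1}^k \langle n^{-H}\zeta_n(t_i), f_i\rangle$. Using the moving-average representation of $\zeta_n$, this rewrites as $\xi_n = \sum_{j} \langle \varepsilon_j, h_{n,j}\rangle$ with $h_{n,j}(s) = \sum_{i=1}^k n^{-(3/2-d(s))} a_{n,j}(s,t_i) f_i(s)$, so the summands are independent and mean zero. I would deduce variance convergence $\operatorname E\xi_n^2 \to \sum_{i,i'}\operatorname E[\langle \mathcal G(\cdot, t_i),f_i\rangle\langle \mathcal G(\cdot, t_{i'}), f_{i'}\rangle]$ from Propositions~\ref{prp:cov} and~\ref{prp:cov2}, which yield pointwise convergence of $n^{-(3-d(r,s))}\operatorname E[\zeta_n(r,t_i)\zeta_n(s,t_{i'})]$ to $V((r,t_i),(s,t_{i'}))$, combined with dominated convergence against a majorant built from $\sigma^2$ and the factors $1/[1-d(v)]^2$ and $1/([1-d(v)][2d(v)-1])$ produced by estimate~\eqref{eq:c(s)}. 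To finish the one-dimensional CLT I would verify the Lindeberg condition for the triangular array $\{\langle \varepsilon_j, h_{n,j}\rangle\}_j$: when $p>2$ this reduces to a Lyapunov bound exploiting the hypothesized $L^{p/2}$-moment of $\int \varepsilon_0^2/(1-d)^2\,\mathrm d\mu$; when $p=2$ and $\bar d<1$ the operator $n^{-H}$ acts with exponent uniformly bounded away from $1/2$, and uniform square-integrability of $\varepsilon_0$ gives Lindeberg via a direct truncation.

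For tightness I would invoke the Kolmogorov--Chentsov moment criterion: since $\zeta_n(0)=0$, it suffices to find $q \ge 1$ and $\beta > 0$, independent of $n$, with
$$\operatorname E\|n^{-H}\zeta_n(t) - n^{-H}\zeta_n(u)\|^q \le C|t - u|^{1+\beta},\qquad t,u\in[0,1].$$
The increment decomposes as a sum of independent centered $L_2(\mu)$-valued random elements, $n^{-H}(\zeta_n(t) - \zeta_n(u)) = \sum_j B_{n,j,t,u}\varepsilon_j$. A uniform-in-$n$ variant of the computation of Proposition~\ref{prp:cov2}, together with~\eqref{eq:c(s)}, yields
$$\operatorname E\|n^{-H}(\zeta_n(t) - \zeta_n(u))\|^2 \le C|t-u|^{3-2\bar d},$$
which in the case $p=2$, $\bar d<1$ closes tightness immediately with $\beta = 2 - 2\bar d$. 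For $p>2$ the exponent $3-2\bar d$ can be as small as $1$, so I would apply Rosenthal's inequality for i.i.d.\ Hilbert-space-valued sums,
$$\operatorname E\|n^{-H}(\zeta_n(t) - \zeta_n(u))\|^p \le C_p\Bigl[\bigl(\operatorname E\|n^{-H}(\zeta_n(t) - \zeta_n(u))\|^2\bigr)^{p/2} + \sum_j \operatorname E\|B_{n,j,t,u}\varepsilon_j\|^p\Bigr].$$
The first term yields $|t-u|^{p(3-2\bar d)/2}$, whose exponent strictly exceeds $1$ since $p>2$ and $3-2\bar d \ge 1$; the second term is bounded via the generalized Minkowski inequality and the $L^{p/2}$-moment hypothesis on $\int\varepsilon_0^2/(1-d)^2\,\mathrm d\mu$.

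The main technical obstacle is promoting the purely asymptotic statements of Propositions~\ref{prp:cov} and~\ref{prp:cov2} to explicit upper bounds uniform in $n$ and with integrable dependence on $d(v)$. In practice this means revisiting the splittings~\eqref{eq:cov}--\eqref{eq:asrel4} with explicit constants, so that each of $\kappa(\cdot,\cdot)$ and $\nu(\cdot,\cdot)$ is controlled by the limiting expression times a constant depending on $d(v)$ only through $1/(1-d(v))$ and $1/(2d(v)-1)$. Once these uniform estimates are in hand, finite-dimensional convergence follows from the one-dimensional CLT and dominated convergence, tightness follows from the Kolmogorov criterion, and their combination delivers the functional central limit theorem.
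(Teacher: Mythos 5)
Your overall architecture (finite-dimensional distributions plus tightness, Rosenthal-type moment inequalities, uniform-in-$n$ versions of Propositions~\ref{prp:cov} and~\ref{prp:cov2}) parallels the paper's, but there is a genuine gap that appears in both halves of your argument: you treat $L_2(\mu)$ as if it were finite-dimensional. First, the Cram\'er--Wold device does not establish convergence in distribution in $L_2^k(\mu)$. Convergence of $\sum_i\langle n^{-H}\zeta_n(t_i), f_i\rangle$ for every choice of $f_1,\dots,f_k$ only gives convergence of the characteristic functionals; in an infinite-dimensional Hilbert space this must be supplemented by tightness of the laws of $(n^{-H}\zeta_n(t_1),\ldots,n^{-H}\zeta_n(t_k))$ in $L_2^k(\mu)$. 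The paper handles exactly this point via Theorem~2 of Cremers and Kadelka, whose condition (II)(b) --- domination of $\operatorname E\|b_n^{-1}\zeta_n^{(q)}(s)\|^2$ by a fixed $\mu$-integrable function, supplied by Proposition~\ref{prp:dom} --- is the missing uniform control; your ``dominated convergence for the variance'' is in the right spirit but is used only to compute a limit, not to produce tightness in $L_2(\mu)$. (The paper also replaces your Lindeberg step by the Gaussian comparison Lemma~\ref{lemma:rho} in the metric $\rho_3$, but that is a stylistic difference, not the problem.)

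Second, and for the same reason, the bare Kolmogorov--Chentsov moment criterion is false in $C([0,1];\mathbb H)$ for infinite-dimensional $\mathbb H$: take $Z_n(t)=t\,e_n$ with $\{e_n\}$ orthonormal; then $Z_n(0)=0$ and $\|Z_n(t)-Z_n(u)\|=|t-u|$, so every moment increment bound holds, yet $Z_n(1)=e_n$ is not tight. Relative compactness in $C([0,1];\mathbb H)$ requires, in addition to equicontinuity, pointwise tightness of the values; this is precisely hypothesis~(i) of the paper's Proposition~\ref{prp:tightnessRS}, verified there by invoking the central limit theorem for $b_n^{-1}S_n$ in $L_2(\mu)$ from the authors' earlier paper. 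Your proposal never establishes tightness of $n^{-H}\zeta_n(t)$ in $L_2(\mu)$ for fixed $t$, so both the finite-dimensional step and the tightness step are incomplete; supplying that single ingredient (via the Cremers--Kadelka domination condition or the known $L_2(\mu)$-valued CLT) would repair both. The moment estimates themselves --- Rosenthal/Ledoux--Talagrand with exponent $(3-2\bar d)p/2>1$ under the stated hypotheses --- match the paper's Proposition~\ref{prp:tightness} and are sound.
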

\begin{remark}
We have that, for $d\in(1/2,1)$ and $p>0$,
$$
\operatorname E\|\varepsilon_0\|^p
	<2^{-p}\operatorname E\Bigl[\int_\mathbb S\frac{\varepsilon_0^2(v)}{[1-d(v)]^2}\mu(\mathrm dv)\Bigr]^{p/2}
$$
since $1-d(v)<1/2$.
\end{remark}
\begin{thm}\label{thm:fclt2}
Suppose that $d=1$ and $\operatorname E\|\varepsilon_0\|^p<\infty$ for some $p>2$. Then we have that
$$(\sqrt n\log n)^{-1}\zeta_n\xrightarrow {\mathcal D}\mathcal G'\quad\text{as}\quad n\to\infty$$
in the space $C([0,1];L_2(\mu))$.
\end{thm}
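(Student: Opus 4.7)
Since $d\equiv1$, each operator $u_j=(j+1)^{-1}I$ is a scalar multiple of the identity on $L_2(\mu)$, so the weights $a_{nj}(t)$ from~\eqref{eq:opweights} are scalar and $\zeta_n(t)=\sum_{j\le\fl{nt}+1}a_{nj}(t)\varepsilon_j$ reduces to a weighted sum of i.i.d.\ $L_2(\mu)$-valued random elements. The plan is to compare $\zeta_n$ with the polygonal partial-sum process of the innovations,
$$
\tilde\zeta_n(t)=\sum_{k=1}^{\fl{nt}}\varepsilon_k+\{nt\}\varepsilon_{\fl{nt}+1},
$$
for which the classical Donsker-Prokhorov FCLT in the separable Hilbert space $L_2(\mu)$ (applicable since $\varepsilon_k$ are i.i.d.\ centered with $\operatorname E\|\varepsilon_0\|^2<\infty$) gives $n^{-1/2}\tilde\zeta_n\xrightarrow{\mathcal D}\mathcal G'$ in $C([0,1];L_2(\mu))$; note the limit is precisely the $L_2(\mu)$-valued Wiener process with covariance $\sigma(r,s)\min(t,u)$. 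By Slutsky's theorem, it is therefore enough to prove
$$
R_n:=(\sqrt n\log n)^{-1}\zeta_n-n^{-1/2}\tilde\zeta_n\xrightarrow{P}0\quad\text{in}\quad C([0,1];L_2(\mu)).
$$

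Write $R_n(t)=\sum_j\delta_{nj}(t)\varepsilon_j$ with scalar coefficients $\delta_{nj}(t)=(\sqrt n\log n)^{-1}a_{nj}(t)-n^{-1/2}\tilde a_{nj}(t)$, where $\tilde a_{nj}(t)=\mathbf{1}_{\{1\le j\le\fl{nt}\}}+\{nt\}\mathbf{1}_{\{j=\fl{nt}+1\}}$. For fixed $t$, orthogonality gives $\operatorname E\|R_n(t)\|^2=\operatorname E\|\varepsilon_0\|^2\sum_j\delta_{nj}^2(t)$. Using $\sum_{m=1}^k m^{-1}=\log k+O(1)$ and splitting $j$ into the ranges $j\le 0$, $1\le j\le\fl{nt}$, and $j=\fl{nt}+1$, a direct computation (substituting $M=\fl{nt}-j+1$ in the middle range and reducing to a Riemann sum of the type $\int_0^t[(\log y)^2+1]\mathrm dy$) yields $\sum_j\delta_{nj}^2(t)=O((\log n)^{-2})$ uniformly in $t\in[0,1]$, so that $\operatorname E\|R_n(t)\|^2\to 0$ pointwise.

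To promote pointwise convergence to uniform convergence, I would establish tightness of $\{R_n\}$ in $C([0,1];L_2(\mu))$ via a Kolmogorov-type moment criterion. For the given $p>2$, the Rosenthal-Burkholder inequality for sums of independent centered $L_2(\mu)$-valued random elements (Ledoux and Talagrand~\cite{ledoux1991}, Chapter~6) gives
$$
\operatorname E\|R_n(t)-R_n(u)\|^p\le C_p(\operatorname E\|\varepsilon_0\|^2)^{p/2}\Bigl(\sum_j(\delta_{nj}(t)-\delta_{nj}(u))^2\Bigr)^{p/2}+C_p\operatorname E\|\varepsilon_0\|^p\sum_j|\delta_{nj}(t)-\delta_{nj}(u)|^p.
$$
Partitioning $j$ according to its position relative to $\fl{nu}$ and $\fl{nt}$ and carrying out a careful analysis of $a_{nj}(t)-a_{nj}(u)$ versus $\log n\cdot[\tilde a_{nj}(t)-\tilde a_{nj}(u)]$ (analogous in spirit to the proof of Proposition~\ref{prp:cov2}), one establishes
$$
\sum_j(\delta_{nj}(t)-\delta_{nj}(u))^2\le\frac{C|t-u|[1+\log^2(1/|t-u|)]}{\log^2 n}\qquad\text{and}\qquad\max_j|\delta_{nj}(t)-\delta_{nj}(u)|\le Cn^{-1/2},
$$
so that for any $\alpha\in(0,(p-2)/2)$ the right-hand side of the Rosenthal bound is $\le C_\alpha|t-u|^{1+\alpha}/\log^p n$, verifying Kolmogorov's tightness criterion.

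Combining tightness with pointwise convergence to zero forces $R_n\xrightarrow{\mathcal D}0$, and hence $R_n\xrightarrow{P}0$ in $C([0,1];L_2(\mu))$ because the limit is deterministic. Slutsky's theorem then yields $(\sqrt n\log n)^{-1}\zeta_n=R_n+n^{-1/2}\tilde\zeta_n\xrightarrow{\mathcal D}\mathcal G'$. The main technical obstacle is the weight analysis in the Rosenthal step: since the normalization contains only a logarithmic denominator, the estimate of $\sum_j(\delta_{nj}(t)-\delta_{nj}(u))^2$ must be delicate enough that the residual $[\log(1/|t-u|)]^2$ factor can be absorbed into a genuine H\"older exponent, and this is precisely the point at which the hypothesis $p>2$ becomes essential.
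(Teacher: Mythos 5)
Your argument is correct in outline, and it is genuinely different from the paper's proof. The paper does not compare $\zeta_n$ with the innovation polygon at all: it proves convergence of finite-dimensional distributions by reducing, via Theorem~2 of Cremers and Kadelka~\cite{cremers1986}, to pointwise-in-$s$ convergence plus second-moment convergence and a domination condition, using a Gaussian-substitution lemma (Lemma~\ref{lemma:rho}, the $\rho_3$ metric) for the pointwise step; it then proves tightness of $b_n^{-1}\zeta_n$ directly through the increment bounds of Proposition~\ref{prp:tightness} and Lemma~\ref{lemma:doublesum}. Your route --- writing $(\sqrt n\log n)^{-1}\zeta_n=n^{-1/2}\tilde\zeta_n+R_n$, invoking the classical Hilbert-space invariance principle for $n^{-1/2}\tilde\zeta_n$ (valid under $\operatorname E\|\varepsilon_0\|^2<\infty$ alone in a Hilbert space), and killing $R_n$ uniformly --- is shorter and more conceptual for $d=1$, and it explains transparently why the limit is the Wiener process of the innovations. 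The trade-off is that it is specific to $d=1$ and cannot be merged with the proof of Theorem~\ref{thm:fclt1} the way the paper's two-track argument ($b_n^{-1}$, $\zeta$) is; moreover the weight analysis you still have to do for $R_n$ is of exactly the same nature as Lemma~\ref{lemma:doublesum}, and the moment inequality you quote is essentially the Ledoux--Talagrand Theorem~6.20 bound the paper also uses, so the technical core is not avoided, only repackaged. Your key estimates check out: $\sum_j\delta_{nj}^2(t)=O(\log^{-2}n)$ uniformly in $t$ (the middle range gives $\log^{-2}n\int_0^t\log^2(1/y)\,\mathrm dy$ and the range $j\le\fl{nu}$ contributes $O(|t-u|/\log^2n)$ via $\sum_{i\ge1}\log^2(1+K/i)=O(K)$), and the factor $|t-u|\log^2(1/|t-u|)$ is indeed absorbed into $|t-u|^{p/2-\alpha}$ with $p/2-\alpha>1$ precisely because $p>2$.

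Two points you should make explicit to close the argument. First, in the second Rosenthal term you need $\sum_j|\delta_{nj}(t)-\delta_{nj}(u)|^p\le(\max_j|\cdot|)^{p-2}\sum_j(\cdot)^2\le Cn^{-(p-2)/2}|t-u|[1+\log^2(1/|t-u|)]\log^{-2}n$, and turning $n^{-(p-2)/2}$ into $|t-u|^{(p-2)/2}$ requires $|t-u|\ge1/n$; so, as in the paper's proof, you must reduce to grid points $t'=\fl{nt}/n$, $u'=\lceil nu\rceil/n$ and handle the within-cell increments separately, where $\zeta_n(t)-\zeta_n(u)=n(t-u)X_{\kappa+1}$ and you need $\operatorname E\|X_0\|^p<\infty$ (which holds here since $\sum_j(j+1)^{-p}<\infty$). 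Second, state that condition~\eqref{list:tight1} of Proposition~\ref{prp:tightnessRS} for $R_n$ follows from $\operatorname E\|R_n(t)\|^2\to0$. With these additions the proof is complete.
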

\begin{thm}\label{thm:fclt3}
Suppose that $\underline d=\essinf d>1$ and $\operatorname E\|\varepsilon_0\|^2<\infty$. Then we have that
$$
(\sqrt n)^{-1}\zeta_n\xrightarrow {\mathcal D}\mathcal G'\quad\text{as}\quad n\to\infty
$$
in the space  $C([0,1];L_2(\mu))$.
\end{thm}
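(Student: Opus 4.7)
The hypothesis $\underline d = \essinf d > 1$ immediately implies that the operator norms $\|u_j\|=\esssup_s(j+1)^{-d(s)}=(j+1)^{-\underline d}$ are summable, so we are in the classical short-memory regime. In this regime the FCLT for linear processes in a separable Hilbert space with summable operator-norm weights is well known; compare Merlev\`ede--Peligrad--Utev~[21] and Ra\v ckauskas--Suquet~[27]. The plan is to reduce the problem to the standard Hilbert-space FCLT for the i.i.d.\ innovations via a Beveridge--Nelson (BN) decomposition.

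Set $U=\sum_{j\ge0}u_j$, a bounded multiplication operator by the bounded function $\zeta(d(\cdot))\le 1+(\underline d-1)^{-1}$, and let $\tilde u_j=\sum_{i>j}u_i$, $R_k=\sum_{j\ge0}\tilde u_j\varepsilon_{k-j}$. Re-indexing yields $X_k=U\varepsilon_k+R_{k-1}-R_k$, hence
\[
S_n=U(\varepsilon_1+\cdots+\varepsilon_n)+R_0-R_n.
\]
Applying the classical Hilbert-space FCLT for centred i.i.d.\ $L_2(\mu)$-valued elements (Ledoux--Talagrand), which requires only $E\varepsilon_0=0$ and $E\|\varepsilon_0\|^2<\infty$, gives $n^{-1/2}\sum_{k=1}^{\fl{n\cdot}}\varepsilon_k\xrightarrow{\mathcal D}W$ in $C([0,1];L_2(\mu))$ for an $L_2(\mu)$-valued Wiener process $W$ whose covariance structure is that of $\varepsilon_0$. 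Continuous mapping by the bounded operator $U$ produces the desired limit $UW$. The polygonal interpolation term $n^{-1/2}\{nt\}X_{\fl{nt}+1}$ is uniformly $o_P(1)$, by the standard estimate $\max_{k\le n+1}\|X_k\|/\sqrt n\to 0$ in probability, valid since $E\|X_k\|^2<\infty$ by Proposition~\ref{prp:samplepaths} under $\underline d>1$.

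The hard part is showing $n^{-1/2}\sup_{t\in[0,1]}\|R_{\fl{nt}}\|=o_P(1)$. A direct computation gives
\[
E\|R_0\|^2=\int_\mathbb S\sigma^2(s)\sum_{j\ge0}\Bigl(\sum_{i>j}(i+1)^{-d(s)}\Bigr)^2\mu(\mathrm ds),
\]
whose inner sum behaves like $j^{2(1-d(s))}$, so the integral is finite only when $d(s)>3/2$. Under the bare hypothesis $\underline d\in(1,3/2]$ the series defining $R_0$ need not even converge in $L_2(\mu)$, so the direct BN step breaks down. To cover the full range $\underline d\in(1,\infty)$ I would bypass this by truncation: set $u_j^{(N)}=u_j\mathbf 1_{\{j\le N\}}$, $U^{(N)}=\sum_{j\le N}u_j$, and let $X_k^{(N)}$, $\zeta_n^{(N)}$ be the corresponding moving-average process and polygonal function. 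For each fixed $N$, $\zeta_n^{(N)}$ is a finite-order moving average, BN is trivial, and one obtains $n^{-1/2}\zeta_n^{(N)}\xrightarrow{\mathcal D}U^{(N)}W$. The approximation step is to verify
\[
\lim_{N\to\infty}\limsup_{n\to\infty}n^{-1}E\sup_{t\in[0,1]}\|\zeta_n(t)-\zeta_n^{(N)}(t)\|^2=0,
\]
which via stationarity and a Doob-type maximal inequality reduces to bounding $n^{-1}E\|S_n-S_n^{(N)}\|^2$ by $\int_\mathbb S\sigma^2(s)(\sum_{j>N}(j+1)^{-d(s)})^2\mu(\mathrm ds)$, and this vanishes by dominated convergence using the uniform bound on $\zeta(d(\cdot))$ and $E\|\varepsilon_0\|^2<\infty$. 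A standard approximation-in-distribution argument then transfers the FCLT from $\zeta_n^{(N)}$ to $\zeta_n$ and identifies the limit with $UW$ (matching $\mathcal G'$ up to the covariance normalisation implicit in the statement).
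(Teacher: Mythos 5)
Your argument is correct in substance, but it takes a genuinely different route from the paper: the paper's entire proof is a one-line citation of Theorem~5 of Ra\v ckauskas and Suquet~\cite{rackauskas2010}, applicable because $\sum_{j=0}^\infty\|u_j\|=\sum_{j=1}^\infty j^{-\underline d}<\infty$, whereas you essentially reconstruct the proof of that cited theorem from scratch. Your reconstruction is sound, and your observation that the naive Beveridge--Nelson remainder $R_k$ fails to converge in $L_2(\mu)$ when $\underline d\in(1,3/2]$ --- forcing the truncation $u_j^{(N)}=u_j\mathbf 1_{\{j\le N\}}$ plus a uniform approximation step --- is exactly the right repair; this is in effect the mechanism inside the cited result. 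Two caveats. First, the phrase ``via stationarity and a Doob-type maximal inequality reduces to bounding $n^{-1}E\|S_n-S_n^{(N)}\|^2$'' is too quick: $\{S_m-S_m^{(N)}\}_m$ is not a martingale in $m$, so Doob does not apply to it directly; the correct route is to write $S_m-S_m^{(N)}=\sum_{j>N}u_j\sum_{k=1}^m\varepsilon_{k-j}$, pull the sum over $j$ outside the supremum by the triangle inequality, apply Doob's $L_2$ inequality to each innovation partial-sum process separately, and conclude $\bigl(E\sup_{m\le n}\|S_m-S_m^{(N)}\|^2\bigr)^{1/2}\le 2\sqrt n\,(E\|\varepsilon_0\|^2)^{1/2}\sum_{j>N}\|u_j\|$, which vanishes as $N\to\infty$ uniformly in $n$. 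Second, you are right to hedge on the covariance: the limit your argument produces is $UW$ with covariance $\sigma(r,s)\bigl(\sum_{j\ge1}j^{-d(r)}\bigr)\bigl(\sum_{j\ge1}j^{-d(s)}\bigr)\min(t,u)$, which does not coincide with the covariance $\sigma(r,s)\min(t,u)$ that the paper assigns to $\mathcal G'$; the paper's statement of Theorem~\ref{thm:fclt3} reuses the symbol $\mathcal G'$ loosely, and the long-run covariance factor coming from $U$ genuinely belongs in the limit. What each approach buys: the paper's citation is economical and keeps the short-memory case out of the way of the main (long-memory) results; your version is self-contained, makes the covariance of the limit explicit, and exposes precisely where summability of $\|u_j\|$ is used.
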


\begin{proof}[Proof of Theorem~\ref{thm:fclt3}]
The convergence of Theorem~\ref{thm:fclt3} follows from Theorem~5 of Ra\v ckauskas and Suquet~\cite{rackauskas2010} since $\sum_{j=0}^\infty\|u_j\|=\sum_{j=1}^\infty j^{-\underline d}<\infty$.
\end{proof}

\subsection{Proof of Theorem~\ref{thm:fclt1} and Theorem~\ref{thm:fclt2}}
The proof contains two major parts. We prove the convergence of the finite-dimensional distributions of the sequences $\{n^{-H}\zeta_n\}$ and $\{(\sqrt n\log n)^{-1}\zeta_n\}$ in the first part and we prove the tightness of these sequences in the second part.

To avoid considerations of two separate but similar cases, we denote $b_n^{-1}=n^{-H}$ and $\zeta=\mathcal G$ in the proof of Theorem~\ref{thm:fclt1}, whereas $b_n=\sqrt{n}\log n$ and $\zeta=\mathcal G'$ in the proof of Theorem~\ref{thm:fclt2}.

\subsubsection*{Convergence of the finite-dimensional distributions}
The convergence of the finite-dimensional distributions means that the convergence
\begin{equation}\label{eq:fdd}
\left(\begin{array}{ccc}b_n^{-1}\zeta_n(t_1)&\ldots&b_n^{-1}\zeta_n(t_q)\end{array}\right)\xrightarrow{ \mathcal D}\left(\begin{array}{ccc}\zeta(t_1)&\ldots&\zeta(t_q)\end{array}\right)
\end{equation}
holds in the space  $L_2^q(\mu)$ for all $q\in\mathbb N$ and for all $t_1,\ldots,t_q\in[0,1]$. Note that the space $L_2^q(\mu)$ is isomorphic to $L_2(\mu; \mathbb R^q)$, the space of $\mathbb R^q$-valued square $\mu$-integrable functions with the norm  
$$
\|f\|=\Bigl[\int_\mathbb S\|f(v)\|^2\mu(\mathrm dv)\Bigr]^{1/2},\quad f\in L_2(\mu;\mathbb R^q),
$$
where $||f(v)||$ denotes the Euclidean norm in $\mathbb R^q$.

Fix $t_1,\ldots,t_q\in[0,1]$ and denote, for $s\in \mathbb S$,
$$
\zeta_n^{(q)}(s)=(\zeta_n(t_1,s),\ldots,\zeta_n(t_q,s))^\mathrm T\quad\text{and}\quad\zeta^{(q)}(s)=(\zeta(t_1,s), \ldots, \zeta(t_q,s))^\mathrm T,
$$
where $\mathbf x^\mathrm T$ denotes transpose of a vector $\mathbf x$.

Let $\zeta_n^{(q)}=\{\zeta_n^{(q)}(s):s\in\mathbb S\}$ and $\zeta^{(q)}=\{\zeta^{(q)}(s):s\in\mathbb S\}$. We need to prove that 
\begin{equation}\label{eq:fdd2}
b_n^{-1}\zeta^{(q)}_n\xrightarrow {\mathcal{D}} \zeta^{(q)}
\end{equation}
in the space  $L_2(\mu; \mathbb R^q)$ to establish \eqref{eq:fdd}.

According to Theorem~2 in Cremers and Kadelka~\cite{cremers1986}, it suffices to prove the following:
\begin{enumerate}[(I)]
\item\label{item:fdd} there exists a measurable set $\mathbb S_0\subset\mathbb S$ such that $\mu(\mathbb S\setminus\mathbb S_0)=0$ and for any $p\in\mathbb N$ and $s_1,\ldots,s_p\in\mathbb S_0$ we have that
$$\bigl(\begin{array}{ccc}b_n^{-1}\zeta_n^{(q)}(s_1)&\ldots&b_n^{-1}\zeta_n^{(q)}(s_p)\end{array}\bigr)\xrightarrow{\mathcal D}\bigl(\begin{array}{ccc}\zeta^{(q)}(s_1)&\ldots&\zeta^{(q)}(s_p)\end{array}\bigr);$$
\item\label{item:tightness}
\begin{enumerate}[(a)]
\item\label{partA}for each $s\in\mathbb S$,
$$\operatorname E\bigl\|b_n^{-1}\zeta_n^{(q)}(s)\bigr\|^2\to\operatorname E\bigl\|\mathcal \zeta^{(q)}(s)\bigr\|^2;$$
\item\label{partB}there exists a $\mu$-integrable function $f:\mathbb S\to[0,\infty)$ such that for each $s\in\mathbb S$ and each $n\in\mathbb N$
$$\operatorname E\bigl\|b_n^{-1}\zeta_n^{(q)}(s)\bigr\|^2\le f(s).$$
\end{enumerate}
\end{enumerate}

We use an auxiliary result to prove \eqref{item:fdd} which is stated in Lemma \ref{lemma:rho} below and may be explained as follows.

Let $\mathbb E$ and $\mathbb F$ be two separable Hilbert spaces and let $L(\mathbb E,\mathbb F)$ be the space of bounded linear operators from $\mathbb E$ to $\mathbb F$. Suppose that a sequence $\{Z_n\}$ of $\mathbb F$-valued random elements can be expressed as
\begin{equation*}
Z_n=\sum_{j=-\infty}^\infty B_{nj}\xi_j,
\end{equation*}
where $\{B_{nj}\}$ is a sequence in $L(\mathbb E,\mathbb F)$ for each $n\in\mathbb N$ and $\{\xi_j\}$ is a sequence of independent and identically distributed $\mathbb E$-valued random elements with $\operatorname E\xi_0=0$ and $\operatorname E\|\xi_0\|^2<\infty$. Using the same linear bounded operators $\{B_{nj}\}$, we construct another sequence $\{\tilde Z_n\}$ of $\mathbb F$-valued random elements  that can be represented as
\begin{equation*}
\tilde Z_n=\sum_{j=-\infty}^\infty B_{nj}\tilde\xi_j,
\end{equation*}
where $\{\tilde\xi_j\}$ is a sequence of independent and identically distributed $\mathbb E$-valued Gaussian random elements with $\operatorname E\tilde\xi_0=0$ and the same covariance operator as that of $\xi_0$.

Under the conditions of Lemma \ref{lemma:rho} below, the sequences $\{Z_n\}$ and $\{\tilde Z_n\}$ have the same limiting behaviour, i.e.\ if one converges in distribution then so does the other and their limits coincide. Before we state Lemma \ref{lemma:rho}, we define the distance function $\rho_k$.
\begin{dfn*}
Let $U$ and $V$ be random elements with values in a separable Hilbert space $\mathbb H$. The distance function $\rho_k$ is given by
$$\rho_k(U,V)=\sup_{f\in F_k}|\operatorname Ef(U)-\operatorname Ef(V)|,$$
where $F_k$ is the set of $k$ times Frechet differentiable functions $f:\mathbb H\to\mathbb R$ such that
$$\sup_{x\in\mathbb H}\bigl|f^{(j)}(x)\bigr|\le1,\quad j=0,1,\ldots,k.$$
\end{dfn*}
It is proved in the paper by Gin\'e and Le\'on~\cite{gine1980} that, for every $k>0$,  the distance function $\rho_k$ metrizes the convergence in distribution of sequences of random elements with values in $\mathbb H$.

\begin{lemma}\label{lemma:rho}
If both of the conditions
\begin{equation}\label{eq:opnorm}
\lim_{n\to\infty}\sup_{j\in\mathbb Z}\|B_{nj}\|=0\quad\text{and}\quad\limsup_{n\to\infty}\sum_{j=-\infty}^\infty\|B_{nj}\|^2<\infty
\end{equation}
are satisfied, then $\lim_{n\to\infty}\rho_3(Z_n,\tilde Z_n)=0$.
\end{lemma}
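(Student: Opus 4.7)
The plan is to adapt Lindeberg's replacement method to Hilbert-space valued sums, exploiting the smoothness of the test functions in $F_3$. Fix $f\in F_3$ and enumerate $\mathbb Z$ as $\{j_1,j_2,\ldots\}$. I would interpolate between $Z_n$ and $\tilde Z_n$ by defining, for each $m\ge 0$,
$$
Z_n^{(m)}=\sum_{l\le m}B_{n,j_l}\tilde\xi_{j_l}+\sum_{l>m}B_{n,j_l}\xi_{j_l},
$$
so that $Z_n^{(0)}=Z_n$ and $Z_n^{(m)}\to\tilde Z_n$ in $L^2$. Letting $\tilde Y_{n,m}$ denote the term common to $Z_n^{(m-1)}$ and $Z_n^{(m)}$, a telescoping expansion reduces the bound on $|\operatorname Ef(Z_n)-\operatorname Ef(\tilde Z_n)|$ to controlling
$$
\sum_{m\ge 1}\bigl|\operatorname Ef(\tilde Y_{n,m}+B_{n,j_m}\xi_{j_m})-\operatorname Ef(\tilde Y_{n,m}+B_{n,j_m}\tilde\xi_{j_m})\bigr|.
$$

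For each summand I would expand $f$ around $\tilde Y_{n,m}$ to second order with a third-order remainder. Because $\tilde Y_{n,m}$ is independent of $(\xi_{j_m},\tilde\xi_{j_m})$ and both innovations are centred, the linear Taylor contributions cancel in expectation. Conditional on $\tilde Y_{n,m}$, the quadratic contributions cancel as well, because $\operatorname E[f''(\tilde Y_{n,m})[B_{n,j_m}\xi,B_{n,j_m}\xi]\mid\tilde Y_{n,m}]$ depends on $\xi$ only through its covariance operator, and $\xi_{j_m}$ and $\tilde\xi_{j_m}$ share that covariance by hypothesis. What remains is the cubic Taylor remainder, which is pointwise bounded by $C\min(\|B_{n,j_m}\xi_{j_m}\|^2,\|B_{n,j_m}\xi_{j_m}\|^3)$ (and analogously for the Gaussian summand) since $\|f''\|_\infty,\|f'''\|_\infty\le 1$.

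The aggregate non-Gaussian remainder is then controlled by a standard Lindeberg truncation: writing $c_n=\sup_j\|B_{nj}\|$, for any $\tau>0$,
$$
\sum_j\operatorname E\min\bigl(\|B_{nj}\xi_j\|^2,\|B_{nj}\xi_j\|^3\bigr)\le\tau\operatorname E\|\xi_0\|^2\sum_j\|B_{nj}\|^2+\Bigl(\sum_j\|B_{nj}\|^2\Bigr)\operatorname E\bigl[\|\xi_0\|^2\mathbf 1_{\{\|\xi_0\|>\tau/c_n\}}\bigr].
$$
By \eqref{eq:opnorm}, the first term is $O(\tau)$ uniformly in $n$ and the second vanishes as $n\to\infty$ by dominated convergence (since $c_n\to 0$ forces $\tau/c_n\to\infty$); letting $\tau\downarrow 0$ closes this half. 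On the Gaussian side, Fernique's theorem yields $\operatorname E\|\tilde\xi_0\|^3<\infty$, so
$$
\sum_j\operatorname E\|B_{nj}\tilde\xi_j\|^3\le\operatorname E\|\tilde\xi_0\|^3\sum_j\|B_{nj}\|^3\le c_n\operatorname E\|\tilde\xi_0\|^3\sum_j\|B_{nj}\|^2\longrightarrow 0.
$$

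The main obstacle I anticipate is the rigorous cancellation of the quadratic Taylor term in the infinite-dimensional setting: one must identify the conditional expectation above with a trace-type functional of the common covariance operator of the innovation, which is precisely what permits the argument to run under the sole assumption $\operatorname E\|\xi_0\|^2<\infty$. A minor technical matter is justifying the telescoping identity when $\mathbb Z$ is infinite: one first truncates to $|j|\le K$, controls the tail in $L^2$ via $\sum_j\|B_{nj}\|^2<\infty$ (using that any $f\in F_3$ is $1$-Lipschitz, so $\rho_3$ is dominated by $L^2$-distance), and lets $K\to\infty$; the uniform-in-$K$ character of the bounds above renders this passage to the limit harmless.
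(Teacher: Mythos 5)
Your proposal is correct and follows essentially the same route as the paper: the paper's proof of Lemma~\ref{lemma:rho} simply defers to Proposition~4.1 of Ra\v ckauskas and Suquet~\cite{rackauskas2011}, whose argument is precisely the Lindeberg one-by-one replacement scheme with a third-order Taylor expansion that you carry out explicitly (cancellation of the linear and quadratic terms via centering and the common covariance operator, truncation at level $\tau/c_n$ for the non-Gaussian remainder, and Fernique's theorem for the Gaussian third moments). The only substantive point the paper adds is the observation that the argument survives when the operators $B_{nj}$ map $\mathbb E$ into a different space $\mathbb F$, which your writeup handles implicitly since every estimate uses only $\|B_{nj}\xi\|\le\|B_{nj}\|\,\|\xi\|$.
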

\begin{proof}
The proof follows from the proof of Proposition 4.1 of Ra\v ckauskas and Suquet~\cite{rackauskas2011}. The only difference is that $\mathbb E=\mathbb F$ in Ra\v ckauskas and Suquet~\cite{rackauskas2011}, but the proof remains valid as long as
$$
\|B_{nk}\|\le\|B_{nk}\|\|f\|
$$
for each $n\in\mathbb N$, each $k\in\mathbb Z$ and each $f\in\mathbb E$.
\end{proof}

Let $s_1,\ldots,s_p\in\mathbb S$. We express the sequence 
$\{\left(\begin{array}{ccc}b_n^{-1}\zeta_n^q(s_1)&\ldots&b_n^{-1}\zeta_n^q(s_p)\end{array}\right)\}$
of random matrices as
\begin{align*}
(\begin{array}{ccc}b_n^{-1}\zeta_n^q(s_1)&\ldots&b_n^{-1}\zeta_n^q(s_p)\end{array})
	&=\sum_{j=-\infty}^\infty\left(\begin{array}{ccc}z_n^{-1}(s_1)a_{nj}(s_1,t_1)\varepsilon_j(s_1)&\cdots&z_n^{-1}(s_p)a_{nj}(s_p,t_1)\varepsilon_j(s_p)\\\vdots &\ddots&\vdots\\z_n^{-1}(s_1)a_{nj}(s_1,t_q)\varepsilon_j(s_1)&\cdots&z_n^{-1}(s_p)a_{nj}(s_p,t_q)\varepsilon_j(s_p)\end{array}\right)\\
	&=\sum_{j=-\infty}^\infty A_{nj}\mathcal E_j,
\end{align*}
where
$$
A_{nj}=\left(\begin{array}{ccc}z_n^{-1}(s_1)a_{nj}(s_1,t_1)&\cdots&z_n^{-1}(s_p)a_{nj}(s_p,t_1)\\\vdots&\ddots&\vdots\\z_n^{-1}(s_1)a_{nj}(s_1,t_q)&\cdots&z_n^{-1}(s_p)a_{nj}(s_p,t_q)\end{array}\right),\quad\mathcal E_j=\operatorname{diag}(\begin{array}{ccc}\varepsilon_j(s_1)&\ldots&\varepsilon_j(s_p)\end{array})
$$
and
$$
z_n(s)=
\begin{cases}
n^{3/2-d(s)},&\mbox{ if }1/2<d(s)<1;\\
\sqrt n\log n,&\mbox{ if }d(s)=1.
\end{cases}
$$

If $d\in(1/2,1]$, then the matrices $\{A_{nj}\}$ satisfy both of conditions \eqref{eq:opnorm}.
Indeed, since
$$
\sup_{j\in\mathbb Z}a_{nj}(s,t)=a_{n1}(s,t)=\sum_{k=1}^{\fl{nt}}k^{-d(s)}+\{nt\}(\fl{nt}+1)^{-d(s)},
$$
we have the following asymptotic relations
$$
\sup_{j\in\mathbb Z}a_{nj}(s,t)\sim
\begin{cases}
\frac{t^{1-d(s)}}{1-d(s)}\cdot n^{1-d(s)},&\mbox{ if }d(s)<1;\\
\log n,&\mbox{ if }d(s)=1.
\end{cases}
$$
We have that
$$
\operatorname E\zeta_n^2(s,t)=\sigma^2(s)\sum_{j=-\infty}^{\fl{nt}+1}a_{nj}^2(s,t)
$$
and we use the asymptotic behaviour of the variance $\operatorname E\zeta_n^2(s,t)$ (see Remark~\ref{remark:var}) to obtain the following asymptotic relations
$$
\sum_{j=-\infty}^{\fl{nt}+1}a_{nj}^2(s,t)\sim
\begin{cases}
\frac{c(s)}{[1-d(s)][3-2d(s)]}\cdot t^{3-2d(s)}\cdot n^{3-2d(s)},&\mbox{ if }1/2<d(s)<1;\\
t\cdot n\log^2n,&\mbox{ if }d(s)=1.
\end{cases}
$$

Now we investigate the sequence $\{\left(\begin{array}{ccc}b_n^{-1}\tilde\zeta_n^q(s_1)& \ldots&b_n^{-1}\tilde\zeta_n^q(s_p)\end{array}\right)\}$, which is expressed as
\begin{equation}\label{eq:gaussian}
\left(\begin{array}{ccc}b_n^{-1}\tilde\zeta_n^q(s_1)&\ldots&b_n^{-1}\tilde\zeta_n^q(s_p)\end{array}\right)=\sum_{j=-\infty}^\infty A_{nj}\tilde{\mathcal E}_j,
\end{equation}
where $\{\tilde{\mathcal E}_j\}$ is a sequence of independent and identically distributed Gaussian random matrices with zero mean and the same covariance operator as that of $\mathcal E_0$. Since $\{\left(\begin{array}{ccc}\tilde\zeta_n^q(r)&\ldots&\tilde\zeta_n^q(s_p)\end{array}\right)\}$ is a sequence of finite-dimensional Gaussian random elements, we only need to check for each $i=1,\ldots,p$ and each $j=1,\ldots,q$ the convergence 
$$
z_n^{-1}(s_i)\operatorname E\zeta_n(s_i,t_j)\to\operatorname E\zeta(s_i,t_j).
$$
But this easily follows from Proposition~\ref{prp:cov} and Proposition~\ref{prp:cov2}. The proof of \eqref{item:fdd} is complete.

Next we prove \eqref{item:tightness}. We prove \eqref{partA} using equalities
$$
\operatorname E\|b_n^{-1}\zeta_n^q(s)\|^2=\sum_{i=1}^q\operatorname E[z_n^{-1}(s)\zeta_n(s,t_i)]^2\quad\text{and}\quad\operatorname E\|\mathcal \zeta^q(s)\|^2=\sum_{i=1}^q\operatorname E\mathcal\zeta^2(s,t_i)
$$
and Remark \ref{remark:var}.

An auxiliary result is needed to prove part $\eqref{partB}$.
\begin{prp}\label{prp:dom}
If $1/2<d(s)<1$, then
\begin{equation}\label{eq:dom}
\operatorname E[n^{-[3/2-d(s)]}\zeta_n(s,t)]^2\le g(s)=2[g_1(s)+g_2(s)+g_3(s)],
\end{equation}
for each $n\in\mathbb N$, where
$$
g_1(s)=\sigma^2(s)\Bigl[1+\frac1{2d(s)-1}\Bigr],\quad g_2(s)=\frac{\sigma^2(s)}{[1-d(s)]^2},\quad g_3(s)=\frac{\sigma^2(s)}{[1-d(s)][2d(s)-1]}
$$
and $c(s)$ is given by \eqref{eq:const}.

If $d=1$, then
\begin{equation}\label{eq:dom2}
\operatorname E[(\sqrt n\log n)^{-1}\zeta_n(s,t_i)]^2\le M\cdot \sigma^2(s),
\end{equation}
where $M$ is a positive constant.
\end{prp}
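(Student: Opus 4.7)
My plan is to start from $\zeta_n(s,t) = S_{\fl{nt}}(s) + \{nt\} X_{\fl{nt}+1}(s)$ and apply $(a+b)^2 \le 2a^2 + 2b^2$, reducing \eqref{eq:dom} to variance bounds for the partial sums:
$$\operatorname E \zeta_n^2(s,t) \le 2\operatorname E S_{\fl{nt}}^2(s) + 2\{nt\}^2\gamma_0(s).$$
The second summand is controlled uniformly by $2g_1(s)$ because $\{nt\}\le 1$ and $\gamma_0(s) = \sigma^2(s)\sum_{j=1}^\infty j^{-2d(s)} \le \sigma^2(s)[1 + (2d(s)-1)^{-1}] = g_1(s)$.

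The key estimate is $\operatorname E S_m^2(s) \le [g_1(s) + g_2(s) + g_3(s)]\, m^{3-2d(s)}$ when $1/2<d(s)<1$. I would expand $\operatorname E S_m^2(s) = m\gamma_0(s) + 2\sum_{h=1}^{m-1}(m-h)\gamma_h(s)$ and bound $\gamma_h(s)$ for $h\ge 1$ by comparing the defining series with an integral: since $x\mapsto (x+1)^{-d(s)}(x+h+1)^{-d(s)}$ is positive and decreasing,
$$\gamma_h(s) \le \sigma^2(s)\int_0^\infty (x+1)^{-d(s)}(x+h+1)^{-d(s)}\,dx \le \sigma^2(s)\, c(s)\, h^{1-2d(s)},$$
where the substitution $y = (x+1)/h$ exposes $c(s)$ as defined in \eqref{eq:const}. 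Bounding $\sum_{h=1}^{m-1}(m-h)h^{1-2d(s)} \le m^{3-2d(s)}/(2-2d(s))$ by a further integral comparison and applying \eqref{eq:c(s)} to split $\sigma^2(s)c(s)/(1-d(s)) \le g_2(s) + g_3(s)$ delivers the required inequality. Since $m = \fl{nt}\le n$ and $3-2d(s)>0$, I may replace $m^{3-2d(s)}$ by $n^{3-2d(s)}$; dividing by $n^{3-2d(s)}$ then yields \eqref{eq:dom}.

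The case $d=1$ uses the same decomposition but exploits the partial-fraction identity $(j+1)^{-1}(j+h+1)^{-1} = h^{-1}[(j+1)^{-1} - (j+h+1)^{-1}]$, whose telescoping sum yields $\gamma_h(s) = \sigma^2(s) H_h/h$ with $H_h = \sum_{k=1}^h k^{-1}$. Using $H_h \le 1+\log h$,
$$\operatorname E S_n^2(s) \le n\gamma_0(s) + 2\sigma^2(s)\, n(1+\log n)\sum_{h=1}^{n-1} h^{-1} \le M\sigma^2(s)\, n\log^2 n$$
for $n\ge 2$ (the case $n=1$ being trivial). Dividing by $n\log^2 n$ produces \eqref{eq:dom2}.

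The principal technical obstacle is numerical bookkeeping: the naive $(a+b)^2$ split leaks an extra $2g_1(s)$ relative to the stated $2[g_1+g_2+g_3]$. A cleaner route, which I would prefer in a careful write-up, is to use $\{nt\}\in[0,1]$ together with the nonnegativity of every $\gamma_h(s)$ to verify directly that $\operatorname E\zeta_n^2(s,t) \le \operatorname E S_{\fl{nt}+1}^2(s)$; this bypasses the constant loss and produces \eqref{eq:dom} after a single application of the $\operatorname E S_m^2(s)$ bound with $m = \fl{nt}+1\le n+1$.
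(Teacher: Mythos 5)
Your proof is correct in substance and, for $1/2<d(s)<1$, follows essentially the paper's route: the paper expands $\operatorname E\zeta_n^2(s,t)$ exactly as in \eqref{eq:var} (keeping the cross term $2\{nt\}\sum_{k=1}^{\fl{nt}}\gamma_k(s)$ rather than invoking $(a+b)^2\le2a^2+2b^2$), bounds $\gamma_0(s)\le g_1(s)$ and the covariance sums by integral comparison, and applies \eqref{eq:c(s)} --- exactly your $\gamma_h(s)\le\sigma^2(s)c(s)h^{1-2d(s)}$ estimate. Keeping the exact expansion is how the paper lands on the constant $2$: the four terms of \eqref{eq:var} contribute $g_1$, $g_2+g_3$, $g_2+g_3$ and $g_1$ times $n^{3-2d(s)}$ respectively. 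Your crude split leaks an extra $2g_1$, as you note; your repaired route via $\operatorname E\zeta_n^2(s,t)\le\operatorname E S_{\fl{nt}+1}^2(s)$ works and in fact does better than you claim, since $\{nt\}>0$ forces $\fl{nt}+1\le n$, giving constant $1$ rather than the factor $(1+1/n)^{3-2d(s)}\le4$ --- and in any case only the $\mu$-integrability of the dominating function matters downstream, not the constant. One bookkeeping slip: $\sum_{j=0}^\infty(j+1)^{-d(s)}(j+h+1)^{-d(s)}\le\int_0^\infty(x+1)^{-d(s)}(x+h+1)^{-d(s)}\,\mathrm dx$ is not quite right, since the integral test leaves the $j=0$ term uncovered; compare instead with $\int_0^\infty x^{-d(s)}(x+h)^{-d(s)}\,\mathrm dx=c(s)h^{1-2d(s)}$ (i.e.\ integrate from $-1$), which restores the stated bound. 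Where you genuinely diverge from the paper is the case $d=1$: you compute $\gamma_h(s)=\sigma^2(s)H_h/h$ by partial fractions and sum explicitly, whereas the paper observes that $\sigma^{-2}(s)\operatorname E[(\sqrt n\log n)^{-1}\zeta_n(s,t)]^2$ does not depend on $s$ and converges (Remark~\ref{remark:var}), hence is bounded by some $M$. Your version is self-contained and yields an explicit constant; the paper's is shorter but leans on the asymptotics already established in Propositions~\ref{prp:cov} and~\ref{prp:cov2}. Both are valid.
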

\begin{proof}
Expanding $\operatorname E\zeta_n^2(s,t)$ gives 
\begin{equation}\label{eq:var}
\operatorname E\zeta_n^2(s,t)=\fl{nt}\gamma_0(s)+2\sum_{k=1}^{\fl{nt}}(\fl{nt}-k)\gamma_k(s)+2\{nt\}\sum_{k=1}^{\fl{nt}}\gamma_k(s)+\{nt\}^2\gamma_0(s).
\end{equation}
Using expression \eqref{eq:cross-co} for cross-covariance, bounding series with integrals from above and using inequality~\eqref{eq:c(s)} leads to the following inequalities that complete the proof of inequality~\eqref{eq:dom}:
$$
\gamma_0(s)\le\sigma^2(s)\Bigl[1+\frac1{2d(s)-1}\Bigr],
$$
$$
\sum_{k=1}^{\fl{nt}}(\fl{nt}-k)\gamma_k(s)
	\le\frac12\Bigl[\frac{\sigma^2(s)}{[1-d(s)]^2}+\frac{\sigma^2(s)}{[1-d(s)][2d(s)-1]}\Bigr]\lfloor nt\rfloor^{3-2d(s)}
$$
and
$$
\sum_{k=1}^{\fl{nt}}\gamma_k(s)\le\frac12\Bigl[\frac{\sigma^2(s)}{[1-d(s)]^2}+\frac{\sigma^2(s)}{[1-d(s)][2d(s)-1]}\Bigr]\lfloor nt\rfloor^{2[1-d(s)]}.
$$

We argue as follows to prove inequality \eqref{eq:dom2}. By setting $r=s$ in expression~$\eqref{eq:cross-co}$, we see that the only term in expression \eqref{eq:var} that depends on $s$ is $\sigma^2(s)$ since $d(s)=1$ for each $s\in\mathbb S$. It follows that the sequence
$$
\frac1{\sigma^2(s)}\cdot\operatorname E[(\sqrt n\log n)^{-1}\zeta_n(s,t)]^2
$$
does not depend on $s$ and it is a convergent sequence (see Remark \ref{remark:var}). So it is bounded by some positive constant, say $M$.
\end{proof}

Now we can obtain the required function $f$ using Proposition~\ref{prp:dom}, the fact that $\operatorname E\|\mathcal \zeta^q(s)\|^2=\sum_{i=1}^q\operatorname E\mathcal\zeta^2(s,t_i)$ and setting
$$
f(s)=
\begin{cases}
q\cdot g(s),&\mbox{ if }d(s)<1;\\
qM\cdot\sigma^2(s),&\mbox{ if }d(s)=1.
\end{cases}
$$
The proof of \eqref{item:tightness} is complete. This completes the proof of the convergence of the finite dimensional distributions of the sequence $\{b_n^{-1}\zeta_n\}$.

\subsubsection*{Tightness}
To establish tightness of the sequence $\{b_n^{-1}\zeta_n\}$, we use the following adaptation of  Theorem~12.3 from  Billingsley~\cite{billingsley1968} (see also Proposition~4.2 in Ra\v ckauskas and Suquet~\cite{rackauskas2011}). 
\begin{prp}\label{prp:tightnessRS}
Let $\mathbb H$ be a separable Hilbert space. The sequence $\{Z_n\}$ of random elements of the space $C([0,1];\mathbb H)$ is tight if
\begin{enumerate}[(i)]
\item\label{list:tight1}$\{Z_n(t)\}$ is tight on $\mathbb H$ for every $t\in[0,1]$;
\item\label{list:tight2}there exists $\gamma\ge0$, $a>1$ and a continuous increasing function $F:[0,1]\to\mathbb R$ such that
$$P(\|Z_n(t)-Z_n(u)\|>\lambda)\le\lambda^{-\gamma}|F(t)-F(u)|^a.$$
\end{enumerate}
\end{prp}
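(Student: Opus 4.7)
The plan is to establish this proposition by a direct adaptation of Billingsley's Theorem~12.3 to the separable Hilbert space setting, mirroring Proposition~4.2 in Ra\v ckauskas and Suquet~\cite{rackauskas2011}. The argument rests on the standard Prokhorov/Arzel\`a--Ascoli characterization of tightness in $C([0,1];\mathbb H)$: the sequence $\{Z_n\}$ is tight if and only if (a) the marginal $\{Z_n(t)\}$ is tight in $\mathbb H$ for each $t\in[0,1]$, and (b) for every $\epsilon>0$,
$$\lim_{\delta\to 0}\limsup_{n\to\infty}P\!\left(\sup_{|t-u|\le\delta}\|Z_n(t)-Z_n(u)\|>\epsilon\right)=0.$$
Because closed bounded sets in $\mathbb H$ need not be compact, one genuinely needs the pointwise tightness in (a), not merely boundedness in probability; this is the one place where the reduction differs from the real-valued case and where assumption \eqref{list:tight1} is indispensable.

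Condition \eqref{list:tight1} is exactly (a), so the remaining work is to deduce (b) from \eqref{list:tight2}. For this I would replay Billingsley's dyadic chaining argument: choose $m$ so that $2^{-m}\le\delta<2^{-m+1}$, and bound the supremum over the dyadic grid $\{k/2^N:k=0,\dots,2^N\}$ by summing contributions of successive refinements. Assumption \eqref{list:tight2} provides the tail bound $\lambda^{-\gamma}|F(t)-F(u)|^a$ on each individual increment; feeding this into a union bound with geometrically decreasing thresholds $\lambda_k$, exploiting $a>1$ to sum a convergent geometric series in the number of levels, and using the uniform continuity of $F$ on $[0,1]$ to control the total mass of $F$ assigned to subintervals of length $\delta$, yields the required convergence as $\delta\to0$, uniformly in $n$. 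Since each $Z_n$ lies in $C([0,1];\mathbb H)$, the supremum over all $|t-u|\le\delta$ is dominated by a constant multiple of the dyadic version by a telescoping argument along dyadic approximations, completing (b).

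The one point deserving verification is that Billingsley's chaining genuinely transfers to $\mathbb H$-valued processes; fortunately every step invokes only the triangle inequality for $\|\cdot\|_{\mathbb H}$ and a real-valued tail estimate on increments, both of which are available here. Consequently there is no essentially new obstacle: the Hilbert-space norm plays the role that the absolute value plays in Billingsley's original proof, and the combinatorics of the dyadic chaining are formally identical. The adaptation therefore proceeds without modification, and I would present it by referring the reader to the detailed calculations in the proof of Proposition~4.2 of Ra\v ckauskas and Suquet~\cite{rackauskas2011}.
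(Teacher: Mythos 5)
The paper gives no proof of this proposition beyond citing Theorem~12.3 of Billingsley~\cite{billingsley1968} and Proposition~4.2 of Ra\v ckauskas and Suquet~\cite{rackauskas2011}, and your sketch is a faithful outline of exactly that adaptation: pointwise tightness of the marginals replaces compactness of bounded sets in $\mathbb H$, and the dyadic chaining bound on the modulus of continuity goes through verbatim since it uses only the triangle inequality and the scalar tail estimate. Your proposal is correct and takes essentially the same route as the paper.
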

It follows from Characiejus and Ra\v ckauskas~\cite{characiejus2013} that the sequence $\{b_n^{-1}S_n\}$ converges in distribution in $L_2(\mu)$. Hence the sequence $\{b_n^{-1}\zeta_n(t)\}$ also converges in distribution in $L_2(\mu)$ and the sequence $\{b_n^{-1}\zeta_n(t)\}$ is tight on $L_2(\mu)$ for every $t\in[0,1]$ and condition~\eqref{list:tight1} of Proposition~\ref{prp:tightnessRS} holds.

Now we show that condition~\eqref{list:tight2} of Proposition~\ref{prp:tightnessRS} holds for the sequence $\{b_n^{-1}\zeta_n\}$. By $C$ we denote a generic positive constant, not necessarily the same at different occurrences. We also denote
$$
\Delta_n^p(t,u)=\operatorname E\|b_n^{-1}[\zeta_n(t)-\zeta_n(u)]\|^p,
$$
where $p\ge2$, $t,u\in[0,1]$ and $n\ge1$.
\begin{prp}\label{prp:tightness}
Suppose that $d\in(1/2,1)$ and the integrals
$$
\int_\mathbb S\frac{\sigma^2(r)}{2d(r)-1}\mu(\mathrm dr)\qquad\text{and}\qquad\operatorname E\Bigl[\int_\mathbb S\frac{\varepsilon_0^2(r)}{[1-d(r)]^2}\mu(\mathrm dr)\Bigr]^{p/2},\quad p\ge2,
$$
are finite. Let $\bar d=\esssup d$. Then
\begin{equation}\label{eq:increments_1}
\Delta_n^p(t,u)\le C\cdot|t-u|^{(3-2\bar d)p/2},\quad n\ge1.
\end{equation}

Suppose that $d=1$ and $\operatorname E\|\varepsilon_0\|^p<\infty$ for $p\ge2$. Then
\begin{equation}\label{eq:increments_2}
\Delta_n^p(t,u)\le C\cdot|t-u|^{p/2},\quad n\ge2.
\end{equation}
\end{prp}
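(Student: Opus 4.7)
The plan is to write the increment as $\zeta_n(t)-\zeta_n(u)=\sum_{j\in\mathbb Z}c_{nj}(t,u)\,\varepsilon_j$, with $c_{nj}(t,u)$ the multiplication operator on $L_2(\mu)$ attached to the multiplier $s\mapsto a_{nj}(s,t)-a_{nj}(s,u)$, and to apply a Hilbert-space moment inequality to this sum of independent centered random elements, reducing the $p$th moment bound to a second-moment calculation that is essentially already contained in Propositions~\ref{prp:cov2} and~\ref{prp:dom}.

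For $p=2$, Fubini and independence of $\{\varepsilon_j\}$ give
$$\Delta_n^2(t,u)=\int_\mathbb S z_n^{-2}(s)\operatorname E[\zeta_n(s,t)-\zeta_n(s,u)]^2\mu(\mathrm ds),$$
where $z_n(s)=n^{3/2-d(s)}$ in case~1 and $z_n(s)=\sqrt n\log n$ in case~2. I would split the real-valued increment into $S_{\lfloor nt\rfloor}(s)-S_{\lfloor nu\rfloor}(s)$ plus two boundary terms; strict stationarity of $\{X_k(s)\}_k$ represents the central piece as $S_m(s)$ with $m\le n|t-u|+1$, and the variance estimate at the heart of Proposition~\ref{prp:dom} (applied to $S_m$ rather than to $\zeta_n$) yields a bound of the form $\operatorname E S_m^2(s)\le g(s)\,m^{3-2d(s)}$. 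After normalization and using $|t-u|^{3-2d(s)}\le|t-u|^{3-2\bar d}$ for $|t-u|\le1$, integrating against $\mu$ together with the two $\mu$-integrability conditions on $\sigma^2/(2d-1)$ and $\sigma^2/(1-d)^2$ (the latter being the $p=2$ case of the second hypothesis) delivers the bound; the degenerate regime $n|t-u|<1$ is handled separately using the explicit formula $\zeta_n(s,t)-\zeta_n(s,u)=n(t-u)X_{\lfloor nu\rfloor+1}(s)$ when $\lfloor nt\rfloor=\lfloor nu\rfloor$, combined with $[n(t-u)]^{2d(s)-1}\le 1$.

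For $p>2$, I would invoke the Hilbert-space Marcinkiewicz--Zygmund--Burkholder inequality $\operatorname E\|\sum_j Y_j\|^p\le C_p\operatorname E(\sum_j\|Y_j\|^2)^{p/2}$ with $Y_j=b_n^{-1}c_{nj}(t,u)\varepsilon_j$. Expanding $\|Y_j\|^2=\int_\mathbb S z_n^{-2}(s)(a_{nj}(s,t)-a_{nj}(s,u))^2\varepsilon_j^2(s)\mu(\mathrm ds)$, swapping sum and $\mu$-integral by Fubini, and applying Minkowski's integral inequality twice (once to pull $\operatorname E[\cdot]^{p/2}$ inside the $\mu$-integral, once to pull it inside the $j$-sum, exploiting that the $\varepsilon_j(s)$ are i.i.d.\ in $j$ for each fixed $s$) reduces the bound to
$$\Delta_n^p(t,u)\le C_p\Bigl[\int_\mathbb S(\operatorname E\varepsilon_0^p(s))^{2/p}\,z_n^{-2}(s)\sigma^{-2}(s)\operatorname E[\zeta_n(s,t)-\zeta_n(s,u)]^2\,\mu(\mathrm ds)\Bigr]^{p/2}.$$
Inserting the $p=2$ pointwise estimate extracts the factor $|t-u|^{(3-2\bar d)p/2}$, leaving a $\mu$-integral of $(\operatorname E\varepsilon_0^p(s))^{2/p}$ against a finite combination of singular weights. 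The pieces carrying $1/(1-d(s))^2$ are controlled by a further application of Minkowski's integral inequality to the hypothesis $\operatorname E[\int\varepsilon_0^2/(1-d)^2\,\mathrm d\mu]^{p/2}<\infty$, giving $(\operatorname E\varepsilon_0^p(s))^{2/p}/(1-d(s))^2\in L^1(\mu)$. Case~2 ($d=1$) follows the same template, with the weights $a_{nj}$ now $s$-independent and with $\operatorname E\|\varepsilon_0\|^p<\infty$ doing all the work.

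The main technical obstacle is organizing the pointwise second-moment estimate so that every singular factor of $1/(2d(s)-1)$ (arising from $\gamma_0(s)$ and from the constant $c(s)=B(1-d(s),2d(s)-1)$ in the asymptotic of $\operatorname E S_m^2(s)$) is multiplied only by $\sigma^2(s)$ and thus absorbed by the $\sigma^2/(2d-1)$ hypothesis, while every singular factor $1/(1-d(s))^2$ is paired with the heavier moment $(\operatorname E\varepsilon_0^p(s))^{2/p}$ and absorbed by the $p$-moment hypothesis; this requires a careful splitting of $\operatorname E S_m^2(s)$ into the diagonal $m\gamma_0(s)$ piece and the cross-covariance piece before the Minkowski step. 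The remaining work, namely the boundary corrections $\{nt\}X_{\lfloor nt\rfloor+1}-\{nu\}X_{\lfloor nu\rfloor+1}$ and the small-increment regime $n|t-u|<1$, is routine.
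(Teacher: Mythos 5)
Your overall architecture (splitting into the small-increment case $\lfloor nt\rfloor=\lfloor nu\rfloor$ and the grid-point case, writing the grid-point increment as a series of independent centered $L_2(\mu)$-valued terms, and reducing a $p$-th moment to second-moment computations) matches the paper, and your $p=2$ argument and the $d=1$ case are sound. But there is a genuine gap in the $p>2$, $d\in(1/2,1)$ case, and it sits exactly at the step you flag as the "main technical obstacle." After Burkholder and the two applications of Minkowski's integral inequality you arrive at a bound whose finiteness requires
$$
\int_\mathbb S\frac{(\operatorname E|\varepsilon_0(s)|^p)^{2/p}}{[1-d(s)]^2}\,\mu(\mathrm ds)<\infty ,
$$
and you claim this follows from the hypothesis $\operatorname E\bigl[\int_\mathbb S\varepsilon_0^2(v)[1-d(v)]^{-2}\mu(\mathrm dv)\bigr]^{p/2}<\infty$ "by a further application of Minkowski's integral inequality." Minkowski's integral inequality runs the other way: it gives
$$
\Bigl(\operatorname E\Bigl[\int_\mathbb S\frac{\varepsilon_0^2(v)}{[1-d(v)]^2}\mu(\mathrm dv)\Bigr]^{p/2}\Bigr)^{2/p}\le\int_\mathbb S\frac{(\operatorname E|\varepsilon_0(s)|^p)^{2/p}}{[1-d(s)]^2}\mu(\mathrm ds),
$$
so the displayed integral is the \emph{stronger} condition; its finiteness is not implied by the hypothesis. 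The same obstruction persists if you replace the second Minkowski step by a scalar Rosenthal bound for $\sum_j c_{nj}^2(s)\varepsilon_j^2(s)$ at fixed $s$: any route that first pushes $\operatorname E[\cdot]^{p/2}$ inside the $\mu$-integral ends up needing pointwise-in-$s$ control of $(\operatorname E|\varepsilon_0(s)|^p)^{2/p}$ against the weight $[1-d(s)]^{-2}$, which the stated assumptions do not provide.

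The paper avoids this by using a Rosenthal-type inequality in the Banach-space form (a modification of Theorem~6.20 of Ledoux and Talagrand),
$$
\operatorname E\Bigl\|\sum_jY_j\Bigr\|^p\le C_p\Bigl[\bigl(\operatorname E\bigl\|\sum_jY_j\bigr\|^2\bigr)^{p/2}+\sum_j\operatorname E\|Y_j\|^p\Bigr],
$$
whose correction term $\sum_j\operatorname E\|Y_j\|^p=\sum_j\operatorname E\bigl[\int_\mathbb S w_{nj}(r)\varepsilon_j^2(r)\mu(\mathrm dr)\bigr]^{p/2}$ keeps the $\mu$-integral \emph{inside} the expectation. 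One then bounds the deterministic weight $w_{nj}(r)$ pointwise by a multiple of $[1-d(r)]^{-2}$ times the correct power of $|t-u|$ and $n$, pulls that factor out, and applies the hypothesis $\operatorname E[\int\varepsilon_0^2(1-d)^{-2}\mathrm d\mu]^{p/2}<\infty$ verbatim. To repair your proof you would either have to switch to this form of Rosenthal's inequality (after which your three-way splitting of the $j$-sum goes through essentially as in the paper), or strengthen the moment hypothesis to the $\mu$-integrability of $(\operatorname E|\varepsilon_0(\cdot)|^p)^{2/p}[1-d(\cdot)]^{-2}$, which would prove a different proposition.
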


We recall that $\lfloor\cdot\rfloor$ is the floor function defined by $\lfloor x\rfloor=\max\{m\in\mathbb Z\mid m\le x\}$ for $x\in\mathbb R$, $\lceil\cdot\rceil$ is the ceiling function defined by $\lceil x\rceil=\min\{m\in\mathbb Z\mid m\ge x\}$ for $x\in\mathbb R$ and $\{x\}=x-\fl{x}$ is a fractional part of $x\in\mathbb R$. Observe that $\{x\}=0$ if and only if $x\in\mathbb Z$ and
$$
\lceil x\rceil-\lfloor x\rfloor=
\begin{cases}
0,&\mbox{ if }x\in\mathbb Z;\\
1,&\mbox{ if }x\in\mathbb R\setminus\mathbb Z.
\end{cases}
$$

We need an auxiliary lemma to prove Proposition~\ref{prp:tightness}.
\begin{lemma}\label{lemma:doublesum}
Let $0\le u<t\le1$, $n\ge 1$ and $\{nt\}=\{nu\}=0$.

If $d\in(1/2,1)$, then
\begin{equation}
n^{-[3-2d(s)]}\sum_{j=-\infty}^{nt}\Bigl[\sum_{k=nu+1}^{nt}v_{k-j}(s)\Bigr]^2
	\le\Bigl[\frac2{[1-d(s)]^2}+\frac1{2d(s)-1}\Bigr]\cdot|t-u|^{3-2d(s)}
\end{equation}
for $n\ge1$, where $v_j(s)$ is given by~\eqref{eq:weightsv}.

If $d=1$, then
\begin{equation}
(\sqrt n\log n)^{-p}\sum_{j=-\infty}^{nt}\Bigl[\sum_{k=nu+1}^{nt}v_{k-j}\Bigr]^p
	\le C\cdot|t-u|^{p/2},
\end{equation}
for $n\ge 2$ and $p\ge 2$, where $v_j$ is given by~\eqref{eq:opweightsv}.
\end{lemma}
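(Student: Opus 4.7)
The plan is to exploit the hypothesis $\{nt\}=\{nu\}=0$, which makes $m\mathrel{\mathop:}=nt-nu$ a positive integer with $t-u=m/n$, and then to split the outer sum over $j$ according to the two ranges $j\le nu$ and $nu<j\le nt$ (terms with $j>nt$ vanish because $v_{k-j}(s)=0$ for $k<j$). Setting $i=nu-j$ on the first range and $i=nt-j$ on the second, the definition \eqref{eq:weightsv} of $v_j(s)$ yields the identity
\begin{align*}
\sum_{j=-\infty}^{nt}\Bigl[\sum_{k=nu+1}^{nt}v_{k-j}(s)\Bigr]^{2}
 = \sum_{i=0}^{m-1}\Bigl[\sum_{\ell=0}^{i}(\ell+1)^{-d(s)}\Bigr]^{2}
  + \sum_{i=0}^{\infty}\Bigl[\sum_{\ell=i+1}^{m+i}(\ell+1)^{-d(s)}\Bigr]^{2},
\end{align*}
and analogously with squares replaced by $p$-th powers for part~2. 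The task reduces to bounding these two deterministic series by elementary integral comparisons.

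For part~1, the first (finite) series is handled by $\sum_{\ell=0}^{i}(\ell+1)^{-d(s)}\le\int_{0}^{i+1}x^{-d(s)}\,dx=(i+1)^{1-d(s)}/[1-d(s)]$ followed by the trivial bound $\sum_{j=1}^{m}j^{2-2d(s)}\le m\cdot m^{2-2d(s)}=m^{3-2d(s)}$, contributing $m^{3-2d(s)}/[1-d(s)]^{2}$. For the second (infinite) series, I first write $\sum_{\ell=i+1}^{m+i}(\ell+1)^{-d(s)}\le[(m+i+1)^{1-d(s)}-(i+1)^{1-d(s)}]/[1-d(s)]$ and then use two complementary estimates for the increment: subadditivity of $x\mapsto x^{1-d(s)}$ gives the bound $m^{1-d(s)}$ (which I use for $0\le i<m$), while the mean value theorem combined with the fact that $x\mapsto x^{-d(s)}$ is decreasing gives the bound $m[1-d(s)](i+1)^{-d(s)}$ (which I use for $i\ge m$). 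Together with $\sum_{i\ge m}(i+1)^{-2d(s)}\le m^{1-2d(s)}/[2d(s)-1]$, the second series contributes at most $m^{3-2d(s)}/[1-d(s)]^{2}+m^{3-2d(s)}/[2d(s)-1]$. Adding the two blocks and dividing by $n^{3-2d(s)}$ yields exactly the stated constant $2/[1-d(s)]^{2}+1/[2d(s)-1]$ times $(t-u)^{3-2d(s)}$.

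For part~2, the same decomposition with $v_{k-j}=(k-j+1)^{-1}$ produces harmonic-type inner sums, bounded by $\sum_{\ell=0}^{i}(\ell+1)^{-1}\le 1+\log(i+1)$ in the first block and $\sum_{\ell=i+1}^{m+i}(\ell+1)^{-1}\le\log\bigl((m+i+1)/(i+1)\bigr)$ in the second. Splitting the infinite series again at $i=m$ and using $\log(1+m/(i+1))\le m/(i+1)$ for $i\ge m$ together with the tail estimate $\sum_{i\ge m}(i+1)^{-p}\le m^{1-p}/(p-1)$, the whole sum is of order $m(\log m)^{p}$. Since $p\ge 2$ and $1\le m\le n$, one has $m(\log m)^{p}\le m^{p/2}(\log n)^{p}$, and dividing by $(\sqrt n\log n)^{p}$ gives $(m/n)^{p/2}=|t-u|^{p/2}$ up to an absolute constant.

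The main obstacle is pinning down the exact constant in part~1: the two-regime argument (subadditivity for small $i$, mean value theorem for large $i$) must be arranged so that the pieces combine precisely into $2/[1-d(s)]^{2}+1/[2d(s)-1]$, which requires the integral comparisons to be tight and the crossover point $i=m$ to be chosen so that no logarithmic or numerical slack appears. Part~2 is softer, since only the existence of some constant $C$ is asserted, so the step $m(\log m)^{p}\le m^{p/2}(\log n)^{p}$ absorbs all multiplicative losses.
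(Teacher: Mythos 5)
Your proposal is correct and follows essentially the same route as the paper: both split the sum over $j$ into the block $nu<j\le nt$ and the block $j\le nu$, further split the latter at distance $m=n(t-u)$, and bound each piece by elementary integral comparisons (your subadditivity/mean-value-theorem bounds are equivalent to the paper's ``number of terms times largest term'' bounds). A minor bonus of your bookkeeping is that the three contributions sum exactly to the stated constant $2/[1-d(s)]^2+1/[2d(s)-1]$, whereas the paper's estimates yield a slightly smaller total that is then majorized by it.
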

\begin{proof}
We investigate the series
\begin{equation}\label{eq:seriespth}
\sum_{j=-\infty}^{nt}\Bigl[\sum_{k=nu+1}^{nt}v_{k-j}(s)\Bigr]^p
\end{equation}
with $p=2$ in the case of $d\in(1/2,1)$ and $p\ge2$ in the case of $d=1$. Let us split series \eqref{eq:seriespth} into two terms
\begin{equation}\label{eq:series>0}
\sum_{j=-\infty}^{nt}\Bigl[\sum_{k=nu+1}^{nt}v_{k-j}(s)\Bigr]^p
	=\sum_{j=-nu+1}^\infty\Bigl[\sum_{k=nu+1}^{nt}(k+j)^{-d(s)}\Bigr]^p
	+\sum_{j=nu+1}^{nt}\Bigl[\sum_{k=1}^{nt-j+1}k^{-d(s)}\Bigr]^p
\end{equation}
and then split the first term on the right-hand side of \eqref{eq:series>0} again into two terms
\begin{multline}\label{eq:seriesinfty}
\sum_{j=-nu+1}^\infty\Bigl[\sum_{k=nu+1}^{nt}(k+j)^{-d(s)}\Bigr]^p
	=\sum_{j=-nu+1}^{n(t-2u)}\Bigl[\sum_{k=nu+1}^{nt}(k+j)^{-d(s)}\Bigr]^p\\
		+\sum_{j=n(t-2u)+1}^\infty\Bigl[\sum_{k=nu+1}^{nt}(k+j)^{-d(s)}\Bigr]^p.
\end{multline}

The first term on the right-hand side of \eqref{eq:seriesinfty} is estimated from above in the following way:
$$
n^{-[3-2d(s)]}\sum_{j=-nu+1}^{n(t-2u)}\Bigl[\sum_{k=nu+1}^{nt}(k+j)^{-d(s)}\Bigr]^2
\le\frac{n|t-u|}{n^{3-2d(s)}}\Bigl[\sum_{k=nu+1}^{nt}(k-nu)^{-d(s)}\Bigr]^2\le\frac{|t-u|^{3-2d(s)}}{[1-d(s)]^2}
$$
if $d\in(1/2,1)$;
\begin{align*}
(\sqrt n\log n)^{-p}\sum_{j=-nu+1}^{n(t-2u)}\Bigl[\sum_{k=nu+1}^{nt}(k+j)^{-1}\Bigr]^p
	&\le\Bigl[\frac1{\log n}\sum_{k=nu+1}^{nt}(k-nu)^{-1}\Bigr]^p\cdot|t-u|^{p/2}\\
	&\le\Bigl[\frac{1+\log(n|t-u|)}{\log n}\Bigr]^p\cdot|t-u|^{p/2}
\end{align*}
if $d=1$ since $1/n\le|t-u|$ (otherwise $t=u$ because we assume that $\{nt\}=\{nu\}=0$). The second term on the right-hand side of \eqref{eq:seriesinfty} is estimated from above using the inequality
$$
\sum_{j=n(t-2u)+1}^\infty\Bigl[\sum_{k=nu+1}^{nt}(k+j)^{-d(s)}\Bigr]^p
	\le(n|t-u|)^p\sum_{j=n(t-2u)+1}^\infty (nu+j)^{-pd(s)}
	\le\frac{(n|t-u|)^{p+1-pd(s)}}{pd(s)-1}
$$
and observing that
$$
n^{-[3-2d(s)]}(n|t-u|)^{p+1-pd(s)}=|t-u|^{3-2d(s)}
$$
if $d\in(1/2,1)$ and $p=2$ and
$$
(\sqrt n\log n)^{-p}(n|t-u|)^{p+1-pd(s)}\le\frac{|t-u|^{p/2}}{\log^p2}
$$
if $d=1$ and $p\ge2$ since $1/n\le|t-u|$.

The second term on the right-hand side of \eqref{eq:series>0} is estimated in the following way:
$$
n^{-[3-2d(s)]}\sum_{j=nu+1}^{nt}\Bigl[\sum_{k=1}^{nt-j+1}k^{-d(s)}\Bigr]^2
	\le\frac1{[1-d(s)]^2[3-2d(s)]}\cdot|t-u|^{3-2d(s)}
$$
if $d\in(1/2,1)$ and
\begin{align*}
(\sqrt n\log n)^{-p}\sum_{j=nu+1}^{nt}\Bigl[\sum_{k=1}^{nt-j+1}k^{-1}\Bigr]^p
	&\le\frac1{n^{p/2}\log^pn}\sum_{j=nu+1}^{nt}[1+\log(nt-j+1)]^p\\
	&\le2\Bigl[\frac1{\log2}+\frac{\log(n|t-u|)}{\log n}\Bigr]^p\cdot|t-u|^{p/2}
\end{align*}
if $d=1$. The proof of Lemma~\ref{lemma:doublesum} is complete.
\end{proof}

Now we are ready to prove Proposition~\ref{prp:tightness}.
\begin{proof}[Proof of Proposition~\ref{prp:tightness}]
Let $t,u\in[0,1]$. There is no loss of generality by assuming that $t>u$. Set $t'=\fl{nt}/n$ and $u'=\lceil nu\rceil/n$,  so that $t,t'\in[\lfloor nt\rfloor/n,\lceil nt\rceil/n]$, $u,u'\in[\lfloor nu\rfloor/n,\lceil nu\rceil/n]$, $\{nt'\}=\{nu'\}=0$ and $|t'-u'|\le|t-u|$. Since
$$
\Delta_n^p(t,u)\le C[\Delta_n^p(t,t')+\Delta_n^p(t',u')+\Delta_n^p(u',u)],
$$
we can establish inequalities~\eqref{eq:increments_1} and~\eqref{eq:increments_2} by investigating two cases: either $t,u\in[\kappa/n,(\kappa+1)/n]$ for some $\kappa\in\{0,\ldots,n-1\}$ or $\{nt\}=\{nu\}=0$.

First, suppose that $t,u\in[\kappa/n,(\kappa+1)/n]$ for some $\kappa\in\{0,\ldots,n-1\}$. Then $|t-u|\le1/n$ and
$$
\zeta_n(t)-\zeta_n(u)=n|t-u|X_{\kappa+1},
$$
so that
$$
\Delta_n^p(t,u)
	\le [n|t-u|]^p\|n^{-H}\|^p\operatorname E\|X_0\|^p\le\operatorname E\|X_0\|^p\cdot|t-u|^{(3-2\bar d)p/2}
$$
if $d\in(1/2,1)$ and
$$
\Delta_n^p(t,u)
	=[n|t-u|]^p(\sqrt n\log n)^{-p}\operatorname E\|X_0\|^p
	\le\frac{\operatorname E\|X_0\|^p}{\log^p 2}\cdot|t-u|^{p/2}
$$
if $d=1$ and $n\ge2$.

We have that
\begin{equation}\label{ineq:landt}
\operatorname E\|X_0\|^p
	\le2^{p-1}\Bigl(C\frac p{\log p}\Bigr)^p\Bigl[(\operatorname E\|X_0\|^2)^{p/2}+\sum_{j=0}^\infty\operatorname E\|u_j\varepsilon_{k-j}\|^p\Bigr]
\end{equation}
by using a slight modification of the inequality stated in Theorem~6.20 of Ledoux and Talagrand~\cite{ledoux1991}. Since
$$
\operatorname E\|X_0\|^2\le\operatorname E\|\varepsilon_0\|^2+\int_\mathbb S\frac{\sigma^2(r)}{2d(r)-1}\mu(\mathrm dr)
$$
and $\sum_{j=0}^\infty\operatorname E\|u_j\varepsilon_{k-j}\|^p\le \operatorname E\|\varepsilon_0\|^p\sum_{j=1}^\infty j^{-p/2}$, we have that $\operatorname E\|X_0\|^p<\infty$.

Secondly, suppose that $\{nt\}=\{nu\}=0$. Then $|t-u|\ge1/n$ ($\Delta_n^p(t,u)=0$ if $\{nt\}=\{nu\}=0$ and $|t-u|<1/n$). The increment $b_n^{-1}[\zeta_n(t)-\zeta_n(u)]$ may be expressed as a series of independent $L_2(\mu)$-valued random elements
$$
b_n^{-1}[\zeta_n(t)-\zeta_n(u)]=\sum_{j=-\infty}^{nt}b_{n}^{-1}\sum_{k=nu+1}^{nt}v_{k-j}\varepsilon_j
$$
where $v_j$ is given by \eqref{eq:weightsv}. Using the same inequality as in \eqref{ineq:landt}, we have that
$$
\Delta_n^p(t,u)
	\le 2^{p-1}\Bigl(C\frac{p}{\log p}\Bigr)^p\Bigl[\Delta_n^p2^{p/2}(t,u)
	+\sum_{j=-\infty}^{nt}\operatorname E\Bigl\|b_n^{-1}\sum_{k=nu+1}^{nt}v_{k-j}\varepsilon_j\Bigr\|^p\,\Bigr].
$$

If $d\in(1/2,1)$, then we have that
\begin{equation}\label{eq:2nd<1}
\Delta_n^2(t,u)
	=\int_\mathbb S\sigma^2(r)n^{-[3-2d(r)]}\sum_{j=-\infty}^{nt}\Bigl[\sum_{k=nu+1}^{nt}v_{k-j}(r)\Bigr]^2\mu(\mathrm dr)
\end{equation}
and
\begin{equation}\label{eq:pth<1}
\sum_{j=-\infty}^{nt}\operatorname E\Bigl\|b_n^{-1}\sum_{k=nu+1}^{nt}v_{k-j}\varepsilon_j\Bigr\|^p
	=\sum_{j=-\infty}^{nt}\operatorname E\Bigl[\int_\mathbb Sn^{-[3-2d(r)]}\bigl|\sum_{k=nu+1}^{nt}v_{k-j}(r)\bigr|^2\varepsilon_j^2(r)\mu(\mathrm dr)\Bigr]^{p/2}.
\end{equation}
If $d=1$, then we obtain
\begin{equation}\label{eq:2nd=1}
\Delta_n^2(t,u)
	=\operatorname E\|\varepsilon_0\|^2(\sqrt{n}\log n)^{-2}\sum_{j=-\infty}^{nt}\Bigl[\sum_{k=nu+1}^{nt}v_{k-j}\Bigr]^2
\end{equation}
and
\begin{equation}\label{eq:pth=1}
\sum_{j=-\infty}^{nt}\operatorname E\Bigl\|b_n^{-1}\sum_{k=nu+1}^{nt}v_{k-j}\varepsilon_j\Bigr\|^pt
	=\operatorname E\|\varepsilon_0\|^p(\sqrt n\log n)^{-p}\sum_{j=-\infty}^{nt}\Bigl[\sum_{k=nu+1}^{nt}v_{k-j}\Bigr]^p.
\end{equation}
We estimate \eqref{eq:2nd<1}, \eqref{eq:2nd=1} and~\eqref{eq:pth=1} using Lemma~\ref{lemma:doublesum} and we need to estimate series \eqref{eq:pth<1} for $p>2$ when $d\in(1/2,1)$. As in \eqref{eq:series>0} and \eqref{eq:seriesinfty}, we split series \eqref{eq:pth<1} into three parts and estimate them from above separately. The estimation is essentially similar to the estimation of series \eqref{eq:seriespth}. Let us recall that we assume that $1/n\le|t-u|$ if $\{nt\}=\{nu\}=0$. The following inequalities are obtained:
$$
\sum_{j=-nu+1}^{n(t-2u)}\operatorname E\Bigl[\int_\mathbb S\frac{|\sum_{k=nu+1}^{nt}(k+j)^{-d(r)}|^2\varepsilon_j^2(r)}{n^{3-2d(r)}}\mu(\mathrm dr)\Bigr]^{p/2}
	\le\operatorname E\Bigl[\int_\mathbb S\frac{\varepsilon_0^2(r)}{[1-d(r)]^2}\mu(\mathrm dr)\Bigr]^{p/2}|t-u|^{(3-2\bar d)p/2}
$$
since
$$
\frac{\sum_{k=nu+1}^{nt}(k-nu)^{-d(r)}}{n^{1-d(r)}}\le\frac{|t-u|^{1-d(r)}}{1-d(r)};
$$
$$
\sum_{j=n(t-2u)+1}^\infty\operatorname E\Bigl[\int_\mathbb S\frac{|\sum_{k=nu+1}^{nt}(k+j)^{-d(r)}|^2\varepsilon_j^2(r)}{n^{3-2d(r)}}\mu(\mathrm dr)\Bigr]^{p/2}
	\le\frac{\operatorname E\|\varepsilon_0\|^p}{p/2-1}|t-u|^{(3-2\bar d)p/2}
$$
since
$$
\Bigl(\frac n{nu+j}\Bigr)^{2d(r)}
	=\Bigl(\frac n{n|t-u|}\Bigr)^{2d(r)}\Bigl(\frac {n|t-u|}{nu+j}\Bigr)^{2d(r)}
	\le n|t-u|^{1-2\bar d}(nu+j)^{-1}
$$
for $j\ge n(t-2u)+1$;
$$
\sum_{j=nu+1}^{nt}\operatorname E\Bigl[\int_\mathbb S\frac{|\sum_{k=1}^{nt-j+1}k^{-d(r)}|^2\varepsilon_j^2(r)}{n^{3-2d(r)}}\mu(\mathrm dr)\Bigr]^{p/2}\\
	\le\frac{2^{1+p(1-\bar d)}}{1+p(1-\bar d)}\operatorname E\Bigl[\int_\mathbb S\frac{\varepsilon_0^2(r)}{[1-d(r)]^2}\mu(\mathrm dr)\Bigr]^{p/2}|t-u|^{(3-\bar d)p/2}
$$
since
$$
\frac{\sum_{k=1}^{nt-j+1}k^{-d(r)}}{n^{1-d(r)}}
	\le\frac1{1-d(r)}\Bigl[\frac{nt-j+1}n\Bigr]^{1-d(r)}
	\le\frac1{1-d(r)}\Bigl[\frac{nt-j+1}n\Bigr]^{1-\bar d}
$$
for $nu+1\le j\le nt$.

The proof of Proposition \ref{prp:tightness} is complete.
\end{proof}
We established the convergence of the finite-dimensional distributions and the tightness of the sequence $\{b_n^{-1}\zeta_n\}$. The proof of Theorem~\ref{thm:fclt1} and Theorem~\ref{thm:fclt2} is complete.

\section*{Acknowledgments}
The authors would like to thank the anonymous reviewer for the valuable comments and suggestions that improved the quality of the paper. The research was partially supported by the Research Council of Lithuania, grant No.~MIP-053/2012.

\bibliography{short.bib,bibliography.bib}
\bibliographystyle{abbrv}

\end{document}